\numberwithin{equation}{section}
\titleformat{\section}[block]{\bfseries\filcenter}% centred title 
{{\upshape\thesection\enspace}}{.5em}{}
\titleformat{\subsection}[block]{\filcenter}% upshape means that is not in italics
{{\upshape\thesubsection\enspace}}{.5em}{} %%^: These reduce the amount of space taken up by section and subsection
\setlist{nosep}  % nosep= no vertical separation between items.
\newcommand{\N}{\mathbb{N}}     % Natural numbers
\newcommand{\R}{\mathbb{R}}     % Real numbers
\newcommand{\Prob}{\mathbb{P}}  % Probability measure
\newcommand{\Exp}{\mathbb{E}}   % Expectation 
\newcommand{\goth}[1]{\mathfrak{#1}} % Gothic letters 
\newcommand{\inner}[2]{\left\langle #1 \, , \, #2 \right\rangle} % Inner product
\newcommand{\norm}[1]{\left|\left|#1\right|\right|}              % Vector norm
\newcommand{\triplet}[3]{\left( #1, #2, #3 \right) }             % General triplet e.g. a probability space
\newcommand{\ProbSpace}{\triplet{\Omega}{\mathscr{F}}{\Prob}}    % Triplet of a Probability Space 
\newcommand{\abs}[1]{\left| #1 \right|}                          % Absolute value  
\newcommand{\defeq}{\mathrel{\mathop:}=}                         % Defined as equal to symbol
\newcommand\restr[2]{{% we make the whole thing an ordinary symbol
  \left.\kern-\nulldelimiterspace % automatically resize the bar with \right
  #1 % the function
  \vphantom{\big|} % pretend it's a little taller at normal size
  \right|_{#2} % this is the delimiter
  }}
\newsavebox{\@brx}
\newcommand{\llangle}[1][]{\savebox{\@brx}{\(\m@th{#1\langle}\)}%
  \mathopen{\copy\@brx\kern-0.5\wd\@brx\usebox{\@brx}}}
\newcommand{\rrangle}[1][]{\savebox{\@brx}{\(\m@th{#1\rangle}\)}%
  \mathclose{\copy\@brx\kern-0.5\wd\@brx\usebox{\@brx}}}
\theoremstyle{plain} 
\newtheorem{theorem}{Theorem}[section]    
\newtheorem{proposition}[theorem]{Proposition} 
\newtheorem{lemma}[theorem]{Lemma}
\newtheorem{assumption}[theorem]{Assumption}
\theoremstyle{definition} 
\newtheorem{example}[theorem]{Example}
\newtheorem{remark}[theorem]{Remark}
 \title{\Large Tightness and weak convergence in the topology of local uniform convergence for stochastic processes in the dual of a nuclear space}
\author{C. A. Fonseca-Mora \orcidlink{0000-0002-9280-8212}}
\affil{Centro de Investigaci\'{o}n en Matem\'{a}tica Pura y Aplicada, \\ Escuela de Matem\'{a}tica, Universidad de Costa Rica. \\
%, San Jos\'{e}, \\Cod. 11501-2060, Costa Rica. \\
\noindent E-mail:  christianandres.fonseca@ucr.ac.cr }
\date{}
\begin{document}

 \maketitle

\abstract{Let $\Phi'$ denote the strong dual of a nuclear space $\Phi$ and let $C_{\infty}(\Phi')$ be the collection of all continuous mappings $x:[0,\infty) \rightarrow \Phi'$ equipped with the topology of local uniform convergence. In this paper we prove sufficient conditions for tightness of probability measures on  $C_{\infty}(\Phi')$ and for weak convergence in   $C_{\infty}(\Phi')$ for a sequence of $\Phi'$-valued processes. We illustrate our results with two applications. First, we show the central limit theorem for local martingales taking values in the dual of an ultrabornological nuclear space. Second, we prove sufficient conditions for the weak convergence in $C_{\infty}(\Phi')$ for a sequence of solutions to stochastic partial differential equations driven by semimartingale noise.}

\smallskip

\emph{2020 Mathematics Subject Classification:} 60B10, 60B12, 60F17, 60H15.

\emph{Key words and phrases:} local uniform convergence,  uniform tightness, weak convergence, dual of a nuclear space.

\section{Introduction}

Tightness and weak convergence are two central topics within the theory of stochastic processes and are of great importance both from a theoretical perspective as well for applications. The main objective of this paper is to prove sufficient conditions for tightness of probability measures on the space of continuous mappings taking values in the dual of a nuclear space, as well to prove sufficient conditions for weak convergence in the local uniform topology for a sequence of continuous stochastic processes taking values in the dual of a nuclear space. We are particularly interested in the formulation of the aforementioned results under the most general conditions. 

In the context of stochastic processes taking values in the strong dual $\Phi'$ of a Fr\'echet nuclear space $\Phi$ (in particular for $\mathscr{S}'(\R^{d})$-valued processes), the first results on tightness and weak convergence were obtained by Mitoma in \cite{Mitoma:1983UT}. In particular, Mitoma shown that in order to prove tightness of a family of probability measures defined on the space $C_{\infty}(\Phi')$ of continuous $\Phi'$-valued mappings, a sufficient condition is tightness of the corresponding one dimensional projections on the space $C_{\infty}(\R)$ of continuous real-valued mappings. In the case of weak convergence of a sequence of continuous $\Phi'$-valued processes, apart from tightness of one dimensional projections on  $C_{\infty}(\R)$ it is required the convergence in distribution of finite dimensional projections. One of the main contributions of Mitoma's work is that it pointed out the strong finite dimensional character of nuclear spaces and its dual spaces, and how this property has important implications in the study of tightness and weak convergence of stochastic processes.  

In \cite{Fouque:1984}, Fouque carried out an extension of the results of Mitoma to the case when $\Phi$ is either the strong dual to a nuclear Fr\'{e}chet space or a countable inductive limit of nuclear Fr\'{e}chet spaces (in particular to $\mathscr{D}'(\R^{d})$-valued processes). 

Although the results of Mitoma and Fouque have found many applications, see \cite{BojdeckiGorostizaTalarczyk:2008, BudhirajaWu:2017, Chen:2019, GubinelliTurra:2020, Lucon:2020} to cite but a few, we are not aware of any attempt to extend these results to a larger class of nuclear spaces.
In this paper we will try to fill this gap on the literature to extend the aforementioned results to the to the setting of cylindrical stochastic processes taking values in the dual of a  general nuclear space $\Phi$. As we will explain in the next section, our results are not only important for the sake of generality, but they also have extensive theoretical and practical application. 

\subsection{Statement of main results}\label{sectMainTheorems}

Let $\Phi$ be a nuclear space with strong dual $\Phi'$. Denote by  $C_{\infty}(\Phi')$ the collection of all continuous mappings $x:[0,\infty) \rightarrow \Phi'$ equipped with the topology of local uniform convergence. 

We now proceed to state the main theorems of this paper. The basic background and terminology   on cylindrical stochastic processes and the topology of local uniform convergence, as it used in the statements of the theorems below, are given in Section \ref{sectionPrelim}. We start with our main result on uniform tightness of probability measures on $C_{\infty}(\Phi')$.

\begin{theorem}\label{theoUniforTightSpaceContiTimeInfty}
Assume that $(\mu_{\alpha}: \alpha \in A)$ is a family of probability measures on $C_{\infty}(\Phi')$ such that:
\begin{enumerate}
\item \label{condEquiFourTrans} For all $T>0$, the family of Fourier transforms $(\hat{\mu}_{\alpha,t}: t \in [0,T], \alpha \in A)$ is equicontinuous at zero. 
\item \label{condFinDimenTight} For each $\phi \in \Phi$, the family $(\mu_{\alpha} \circ \Pi^{-1}_{\phi}: \alpha \in A)$ of probability measures on $C_{\infty}(\R)$ is uniformly tight. 
\end{enumerate}
Then there exists a weaker countably Hilbertian topology $\theta$ on $\Phi$ such that  $(\mu_{\alpha}: \alpha \in A)$ is uniformly tight on $C_{\infty}((\widehat{\Phi}_{\theta})')$, and in particular is uniformly tight on $C_{\infty}(\Phi')$. 
\end{theorem}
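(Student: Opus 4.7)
The plan is to reduce the general nuclear setting to Mitoma's Fréchet nuclear setting by constructing, from the data in hypotheses \eqref{condEquiFourTrans}--\eqref{condFinDimenTight}, a weaker countably Hilbertian topology $\theta$ on $\Phi$ whose completion $\widehat{\Phi}_\theta$ is a Fréchet nuclear space. Once such $\theta$ is in hand, the strong dual $(\widehat{\Phi}_\theta)'$ injects continuously into $\Phi'$, and the classical Mitoma--Fouque results apply directly to $\widehat{\Phi}_\theta$.

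First I would extract the Hilbertian structure from hypothesis \eqref{condEquiFourTrans}. For each $n \in \N$, equicontinuity at zero of $(\hat{\mu}_{\alpha,t}: t \in [0,n], \alpha \in A)$ yields, for every $\epsilon>0$, a $0$-neighborhood in $\Phi$ on which $|1-\hat{\mu}_{\alpha,t}(\phi)|<\epsilon$; since Hilbertian $0$-neighborhoods form a basis in a nuclear space, I can select a continuous Hilbertian seminorm $p_n$ controlling the family on $[0,n]$. Monotonising, I may assume $p_1\le p_2\le\cdots$.

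Second, I would invoke nuclearity to upgrade this to a generating sequence with Hilbert--Schmidt linking maps. For each $n$ there exists a continuous Hilbertian seminorm $q_n\ge p_n$ such that the canonical inclusion $\Phi_{q_n}\to\Phi_{p_n}$ is Hilbert--Schmidt; iterating and re-monotonising produces $q_1\le q_2\le\cdots$ with every $\Phi_{q_{n+1}}\to\Phi_{q_n}$ Hilbert--Schmidt. Let $\theta$ be the topology on $\Phi$ generated by $\{q_n:n\in\N\}$. Then $\theta$ is weaker than the original topology of $\Phi$, is countably Hilbertian by construction, and $\widehat{\Phi}_\theta$ is a Fréchet nuclear space as a countable projective limit of Hilbert spaces with Hilbert--Schmidt bonding maps. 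The transpose of the identity $\Phi\to(\Phi,\theta)$ induces a continuous injection $(\widehat{\Phi}_\theta)'\hookrightarrow\Phi'$.

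Third, I would show that each $\mu_\alpha$ is carried by (and hence may be regarded as a Borel probability on) $C_\infty((\widehat{\Phi}_\theta)')$. This is the main technical obstacle: the Fourier-transform bound in terms of $q_n$ controls the laws of the one-time marginals as cylindrical measures on $(\widehat{\Phi}_\theta)'$, and by a Minlos-type regularisation (using the Hilbert--Schmidt step $\Phi_{q_{n+1}}\to\Phi_{q_n}$) each marginal is a genuine Radon measure on the separable Hilbert space dual $\Phi_{q_n}'$. A refinement of the same argument, carried out uniformly on compact time intervals $[0,n]$ and combined with the a priori continuity of paths in $\Phi'$, lifts the whole measure from $C_\infty(\Phi')$ to $C_\infty((\widehat{\Phi}_\theta)')$; modulus-of-continuity estimates coming from the $\Pi_\phi$-projections in hypothesis \eqref{condFinDimenTight} handle the joint control over the time parameter.

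Finally, with each $\mu_\alpha$ viewed on $C_\infty((\widehat{\Phi}_\theta)')$, I would apply the Mitoma--Fouque theorem to the Fréchet nuclear space $\widehat{\Phi}_\theta$: hypothesis \eqref{condFinDimenTight} provides uniform tightness of the one-dimensional projections $\Pi_\phi$ for every $\phi\in\Phi$, and these $\phi$ are dense in $\widehat{\Phi}_\theta$, which is enough in Mitoma's criterion. This yields uniform tightness of $(\mu_\alpha:\alpha\in A)$ on $C_\infty((\widehat{\Phi}_\theta)')$, and continuity of $(\widehat{\Phi}_\theta)'\hookrightarrow\Phi'$ transfers uniform tightness to $C_\infty(\Phi')$. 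The delicate step throughout is the passage from the Fourier-transform control to actual continuous $(\widehat{\Phi}_\theta)'$-valued paths; everything else is a careful bookkeeping of the nuclear/Hilbert--Schmidt factorisations.
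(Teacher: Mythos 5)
Your strategy is sound and is, at bottom, the same as the paper's: both proofs first pass to a weaker countably Hilbertian topology $\theta$ via a regularization step driven by hypothesis \ref{condEquiFourTrans}, and then run a Mitoma-type compactness construction on $C_{T}((\widehat{\Phi}_{\theta})')$ using Hilbert--Schmidt links and hypothesis \ref{condFinDimenTight}. The packaging differs: you arrange the linking maps $i_{q_{n},q_{n+1}}$ to be Hilbert--Schmidt so that $\widehat{\Phi}_{\theta}$ becomes a (pseudo-)Fr\'echet nuclear space and Mitoma's theorem can be cited verbatim, whereas the paper only uses that $\widehat{\Phi}_{\theta}$ is ultrabornological and re-derives the compactness estimates directly (Steps 2--4 of Lemma \ref{lemmaUniforTightSpaceContiOnT}), pulling the Hilbert--Schmidt dominations $p\leq q\leq\varrho$ from the nuclearity of the original $\Phi$ rather than building them into $\theta$. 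Your reduction buys brevity; the paper's direct argument buys explicit control of the compact sets, which it needs anyway for the passage from $[0,T]$ to $[0,\infty)$.

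Two of your steps carry real weight and are only asserted. First, the lift of each $\mu_{\alpha}$ to a Radon measure on $C_{\infty}((\widehat{\Phi}_{\theta})')$, which you correctly flag as the main obstacle, is exactly the regularization theorem (Theorem 3.2 in \cite{FonsecaMora:Existence}); a bare ``Minlos-type'' argument handles the one-time marginals but not the existence of a single continuous $(\widehat{\Phi}_{\theta})'$-valued version, and the uniformity of $\theta$ over $\alpha$ must be extracted from the equicontinuity in \ref{condEquiFourTrans} (via the Kallenberg-type bound \eqref{eqEquiContiCylRVAndFourierTransf}). You should cite that theorem rather than sketch it. Second, ``the $\phi\in\Phi$ are dense in $\widehat{\Phi}_{\theta}$, which is enough in Mitoma's criterion'' needs justification: Mitoma's hypothesis is tightness of $\Pi_{\psi}$ for \emph{every} $\psi$ in the Fr\'echet space, and the reduction to a dense subspace works only because (i) his proof uses the projections solely along a complete orthonormal system of some $\Phi_{\varrho}$, which can be chosen inside $\Phi$, and (ii) the Baire-category/equicontinuity estimate extends from $\Phi$ to $\widehat{\Phi}_{\theta}$ by continuity. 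This is precisely what the paper's Steps 2--4 make explicit. Finally, Mitoma's theorem is stated on a compact time interval; the extension to $C_{\infty}$ requires the projective-limit and Arzel\`a--Ascoli argument of Lemma \ref{lemmaExistenceCompactTightness} (including the re-aggregation of the topologies $\theta_{\epsilon}$ into a single $\theta$), which your outline omits entirely.
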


\begin{remark}
If $\Phi$ is a barrelled nuclear space, the conditions \ref{condEquiFourTrans} and \ref{condFinDimenTight} in Theorem \ref{theoUniforTightSpaceContiTimeInfty} are necessary for uniform tightness of a family $(\mu_{\alpha}: \alpha \in A)$ of probability measures on $C_{\infty}(\Phi')$. We will explain this later in Remark \ref{remaNecessaryCondTighness}. 
\end{remark}

\begin{remark}
If $\Phi$ is an ultrabornological nuclear space, then condition \ref{condFinDimenTight} implies condition \ref{condEquiFourTrans}  in Theorem \ref{theoUniforTightSpaceContiTimeInfty}. 
This fact will be explained later in Remark \ref{remaUltraboTheoTight}. 
We therefore have that 
Theorem \ref{theoUniforTightSpaceContiTimeInfty} generalizes 
Theorem 3.1 of Mitoma in \cite{Mitoma:1983UT}, there formulated under the assumption that $\Phi$ is a Fr\'{e}chet nuclear space. Moreover, Theorem \ref{theoUniforTightSpaceContiTimeInfty} also generalizes the results in Proposition III.1 and Th\'{e}or\`{e}me IV.1 of Fouque in \cite{Fouque:1984}, there formulated respectively for the strong dual to a nuclear Fr\'{e}chet space and for the strict inductive limit of a sequence of nuclear Fr\'{e}chet spaces.  
\end{remark}

\begin{remark}
Condition \ref{condFinDimenTight} in Theorem \ref{theoUniforTightSpaceContiTimeInfty} was already present in the work of Mitoma and Fouque. On the other hand, the condition  \ref{condFinDimenTight} is new and plays a fundamental role in the context of a general nuclear space $\Phi$. As we will see in the proof of Theorem \ref{theoUniforTightSpaceContiTimeInfty} in Section \ref{sectProofMainResults}, the condition  \ref{condFinDimenTight} allow us to make use of the regularization theorem (see Theorem 3.2 in \cite{FonsecaMora:Existence}) which in this context is fundamental to show that each $\mu_{\alpha}$ defines a Radon probability measure on  $C_{\infty}((\widehat{\Phi}_{\theta})')$. Combined with condition \ref{condFinDimenTight} the above shows that  $(\mu_{\alpha}: \alpha \in A)$ is uniformly tight on $C_{\infty}((\widehat{\Phi}_{\theta})')$. The existence of a weaker countably Hilbertian topology $\theta$ is of great importance as it settles our problem of checking tightness in the context of continuous functions with values in the dual of a metrizable space. 
\end{remark}

Our main result on weak convergence is the following. It gives sufficient conditions for weak convergence in $C_{\infty}(\Phi')$ for a sequence of cylindrical processes  in $\Phi'$.

\begin{theorem}\label{theoWeakConvergCylinProcesInCTInfy}
Let $\Phi$ be a nuclear space. Let $(X^{n}: n \in \N)$, with $X^{n} =(X^{n}_{t}: t \geq 0)$, be a sequence of cylindrical process in $\Phi'$ satisfying:
\begin{enumerate}
\item For every $n \in \N$ and $\phi \in \Phi$ the real-valued process $X^{n}(\phi)=( X^{n}_{t}(\phi): t \geq 0)$ is continuous.
\item \label{condEquiTheoWeakConv} For every $T > 0$, the family $( X^{n}_{t}: t \in [0,T], n \in \N )$ of linear maps from $\Phi$ into $L^{0} \ProbSpace$ is equicontinuous at zero.   
\item \label{condTighTheoWeakConv} For each $\phi \in \Phi$, the sequence of distributions of $X^{n}(\phi)$ is uniformly tight on $C_{\infty}(\R)$.
\item  $\forall$ $m \in \N$, $\phi_{1}, \dots, \phi_{m} \in \Phi$, $t_{1}, \dots, t_{m} \geq 0$, the distribution of the $m$-dimensional random vector $(X_{t_{1}}^{n}(\phi_{1}), \cdots, X_{t_{m}}^{n}(\phi_{m}))$ converges in distribution to some probability measure on $\R^{m}$. 
\end{enumerate}
Then there exist a weaker countably Hilbertian topology $\theta$ on $\Phi$ and some  $C_{\infty}((\widehat{\Phi}_{\theta})')$-valued random variables  $Y$ and  $Y^{n}$, $n \in \N$,  such that 
\begin{enumerate}[label=(\roman*)]
\item \label{concluRegularVersTheoWeakConv} For every $\phi \in \Phi$ and $n \in \N$, the real-valued processes $\inner{Y^{n}}{\phi}$ and $X^{n}(\phi)$ are indistinguishable.
\item \label{concluTightTheoWeakConv} The sequence $(Y^{n}:n \in \N)$ is  uniformly tight on $C_{\infty}((\widehat{\Phi}_{\theta})')$.
\item \label{concluConvergenTheoWeakConv} $Y^{n} \Rightarrow Y$ in $C_{\infty}((\widehat{\Phi}_{\theta})')$
\end{enumerate}
Moreover, \ref{concluTightTheoWeakConv} and \ref{concluConvergenTheoWeakConv} are also satisfied for $Y$ and $(Y^{n}: n \in \N)$ as $C_{\infty}(\Phi')$-valued random variables. 
\end{theorem}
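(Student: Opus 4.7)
The plan is to first invoke the regularization theorem to produce $C_{\infty}((\widehat{\Phi}_{\theta})')$-valued versions $Y^{n}$ of the cylindrical processes $X^{n}$ using a \emph{single} weaker countably Hilbertian topology $\theta$ on $\Phi$, next apply Theorem \ref{theoUniforTightSpaceContiTimeInfty} to obtain uniform tightness, and finally identify the weak limit using the finite-dimensional convergence in hypothesis (4).

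First I would apply Theorem 3.2 of \cite{FonsecaMora:Existence} (the regularization theorem) jointly to the whole family $(X^{n}: n \in \N)$. The decisive input is that the equicontinuity of $(X^{n}_{t}: t \in [0,T], n \in \N)$ at zero is uniform in both $n$ and $t$ for every $T>0$: this should allow the extraction of a \emph{single} countably Hilbertian topology $\theta$ on $\Phi$, weaker than the original one, such that for each $n$ there is a $C_{\infty}((\widehat{\Phi}_{\theta})')$-valued random variable $Y^{n}$ with $\inner{Y^{n}_{t}}{\phi}=X^{n}_{t}(\phi)$ a.s.\ for all $\phi \in \Phi$ and $t \geq 0$. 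Combined with the continuity in hypothesis (1), this yields indistinguishability as required in \ref{concluRegularVersTheoWeakConv}.

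Next I would verify the two hypotheses of Theorem \ref{theoUniforTightSpaceContiTimeInfty} for the laws $\mu_{n}=\mathcal{L}(Y^{n})$. Hypothesis \ref{condFinDimenTight} there follows from \ref{condTighTheoWeakConv} above via indistinguishability. For \ref{condEquiFourTrans}, the elementary bound $\abs{1-e^{ix}} \leq 2 \wedge \abs{x}$ yields $\abs{1-\hat{\mu}_{n,t}(\phi)} \leq \Exp[2 \wedge \abs{X^{n}_{t}(\phi)}]$, whose right-hand side is an equivalent modulus for convergence in probability; hence the equicontinuity in \ref{condEquiTheoWeakConv} transfers directly to the Fourier transforms uniformly over $n$ and $t \in [0,T]$. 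Theorem \ref{theoUniforTightSpaceContiTimeInfty} then delivers \ref{concluTightTheoWeakConv}. Prokhorov's theorem applied to the uniformly tight family $(\mu_{n})$ shows that every subsequence admits a further weakly convergent subsequence; hypothesis (4), together with indistinguishability, pins down the finite-dimensional distributions of any subsequential limit, which in turn determine the full law since the Borel $\sigma$-algebra on $C_{\infty}((\widehat{\Phi}_{\theta})')$ is generated by the finite-dimensional evaluations. Thus the entire sequence converges to a unique law, realized by some $Y$, giving \ref{concluConvergenTheoWeakConv}.

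The final assertion transfers \ref{concluTightTheoWeakConv} and \ref{concluConvergenTheoWeakConv} to $C_{\infty}(\Phi')$ via the continuous inclusion $(\widehat{\Phi}_{\theta})' \hookrightarrow \Phi'$, which induces a continuous map on the corresponding spaces of continuous paths. The step I anticipate as the main obstacle is the first one: obtaining a common $\theta$ valid simultaneously for every $n$, which requires feeding the regularization theorem the equicontinuity uniformly in $n$ rather than applying it one $n$ at a time (which would produce a different topology $\theta_{n}$ for each $X^{n}$, and no joint target space).
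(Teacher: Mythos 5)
Your proposal is correct and follows essentially the same route as the paper: equicontinuity of the maps $X^{n}_{t}$ gives equicontinuity of the Fourier transforms, the regularization theorem (applied uniformly in $n$ and $t$, exactly as in Step 1 of the proof of Lemma \ref{lemmaUniforTightSpaceContiOnT}) produces the versions $Y^{n}$ in a common $C_{\infty}((\widehat{\Phi}_{\theta})')$, Theorem \ref{theoUniforTightSpaceContiTimeInfty} gives tightness, and Prokhorov plus the finite-dimensional convergence in (4) — using that $\mathscr{C}(C_{\infty}((\widehat{\Phi}_{\theta})'))=\mathcal{B}(C_{\infty}((\widehat{\Phi}_{\theta})'))$ from Proposition \ref{propProperContiSpaceWeakerCHT} — identifies the unique weak limit, which then transfers to $C_{\infty}(\Phi')$ by continuity of the inclusion. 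The obstacle you flag (a single $\theta$ for all $n$) is handled in the paper exactly as you anticipate, via the uniformity in $n$ of the equicontinuity estimate.
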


\begin{remark}
The result of Theorem \ref{theoWeakConvergCylinProcesInCTInfy} can be interpreted as follows: if we have a sequence of cylindrical processes $(X^{n}:n \in \N)$ satisfying the hypothesis of the theorem, then one can find a sequence of $\Phi'$-valued processes $(Y^{n}:n \in \N)$ with continuous paths such that each $Y^{n}$ is a regular version of $X^{n}$ and such that $(Y^{n}:n \in \N)$ converges weakly in $C_{\infty}(\Phi')$. 
Thus Theorem \ref{theoWeakConvergCylinProcesInCTInfy} is both a result on regularization as well a result on weak convergence in $C_{\infty}(\Phi')$.
\end{remark}

\begin{remark} 
At a first glance, the formulation of  Theorem \ref{theoWeakConvergCylinProcesInCTInfy} in the context of cylindrical processes seems a bit abstract for practical applications. However, the cylindrical context is very useful when it comes to formulate new theoretical results. For example, in \cite{FonsecaMora:UCPConvergence} we apply Theorem \ref{theoWeakConvergCylinProcesInCTInfy} to introduce sufficient conditions for the convergence uniform on compacts in probability of a sequence of $\Phi'$-valued stochastic process. 
\end{remark}

We can reformulate the result of Theorem \ref{theoWeakConvergCylinProcesInCTInfy} in a context that makes it useful for practical applications. This is done below under  the assumption that $\Phi$ is an ultrabornological nuclear space (of which the classical spaces of test functions and distributions are a particular example). 

\begin{theorem}\label{theoWeakConvUltrabornological}
Let $\Phi$ be an ultrabornological nuclear space. For each $n \in \N$, let $X^{n}=(X^{n}_{t}: t \geq 0)$ be a $\Phi'$-valued regular process with continuous paths satisfying:
\begin{enumerate}
\item  For each $\phi \in \Phi$, the sequence of distributions of $\inner{X^{n}}{\phi}$ is uniformly tight on $C_{\infty}(\R)$.
\item  $\forall$ $m \in \N$, $\phi_{1}, \dots, \phi_{m} \in \Phi$, $t_{1}, \dots, t_{m} \geq 0$, the distribution of the $m$-dimensional random vector $(\inner{X_{t_{1}}^{n}}{\phi_{1}}, \cdots, \inner{X_{t_{m}}^{n}}{\phi_{m}})$ converges in distribution to some probability measure on $\R^{m}$. 
\end{enumerate} 
Then, there exists a $\Phi'$-valued regular process $X=(X_{t}: \geq 0)$ with continuous paths such that $X^{n} \Rightarrow X$ in $C_{\infty}(\Phi')$. 
\end{theorem}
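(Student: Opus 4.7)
The plan is to apply Theorem \ref{theoWeakConvergCylinProcesInCTInfy} to the cylindrical processes induced by the $X^{n}$. For each $n \in \N$ and $\phi \in \Phi$, set $X^{n}(\phi) \defeq \inner{X^{n}}{\phi}$. I must then verify the four hypotheses of Theorem \ref{theoWeakConvergCylinProcesInCTInfy}. Hypothesis (1) there (continuity of $X^{n}(\phi)$) follows immediately from the fact that $X^{n}$ has continuous paths as a $\Phi'$-valued process, so $t \mapsto \inner{X^{n}_{t}}{\phi}$ is continuous. Hypotheses (3) and (4) of Theorem \ref{theoWeakConvergCylinProcesInCTInfy} are precisely the two hypotheses of the present theorem rewritten in cylindrical notation.

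The only hypothesis that needs work is (2), the equicontinuity at zero of the family $(X^{n}_{t}: t \in [0,T], n \in \N)$ of linear maps from $\Phi$ into $L^{0}\ProbSpace$. This is the point where the ultrabornological assumption is essential: by the very argument underlying Remark \ref{remaUltraboTheoTight}, on an ultrabornological nuclear space the uniform tightness of the one-dimensional projections on $C_{\infty}(\R)$ already forces the equicontinuity. The reason is that for each fixed $T>0$ the family of real cylindrical maps $\{\phi \mapsto X^{n}_{t}(\phi) : t \in [0,T], n \in \N\}$ is pointwise bounded in probability (being tight on $C_{\infty}(\R)$ for each $\phi$), and on an ultrabornological nuclear space such a pointwise-bounded family of continuous linear maps into $L^{0}\ProbSpace$ is automatically equicontinuous (a Banach--Steinhaus type conclusion). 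I will simply cite this fact through Remark \ref{remaUltraboTheoTight}.

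With all four hypotheses verified, Theorem \ref{theoWeakConvergCylinProcesInCTInfy} yields a weaker countably Hilbertian topology $\theta$ on $\Phi$ and $C_{\infty}((\widehat{\Phi}_{\theta})')$-valued random variables $Y$ and $Y^{n}$, $n \in \N$, such that for every $\phi \in \Phi$ the real processes $\inner{Y^{n}}{\phi}$ and $X^{n}(\phi)=\inner{X^{n}}{\phi}$ are indistinguishable, and such that $Y^{n} \Rightarrow Y$ in $C_{\infty}(\Phi')$ (using the continuous inclusion $(\widehat{\Phi}_{\theta})' \hookrightarrow \Phi'$).

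The main obstacle, and the final step, is to transfer the weak convergence from $(Y^{n})$ to $(X^{n})$. For this I will argue that $X^{n}$ and $Y^{n}$ induce the same Radon probability measure on $C_{\infty}(\Phi')$. Since $X^{n}$ is a $\Phi'$-valued regular process with continuous paths, it defines a Radon random variable in $C_{\infty}(\Phi')$; the same holds for $Y^{n}$ via the continuous inclusion above. A Radon probability measure on $C_{\infty}(\Phi')$ is uniquely determined by the finite-dimensional distributions of the evaluation maps $x \mapsto (\inner{x(t_{1})}{\phi_{1}},\dots,\inner{x(t_{m})}{\phi_{m}})$ for $\phi_{i} \in \Phi$, $t_{i} \geq 0$, and the indistinguishability $\inner{Y^{n}}{\phi}=\inner{X^{n}}{\phi}$ a.s.\ (for each $\phi$) guarantees that these joint distributions coincide for $X^{n}$ and $Y^{n}$. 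Hence $X^{n}$ and $Y^{n}$ have the same law on $C_{\infty}(\Phi')$, and setting $X \defeq Y$ gives $X^{n} \Rightarrow X$ in $C_{\infty}(\Phi')$, with $X$ a $\Phi'$-valued regular process with continuous paths, as required.
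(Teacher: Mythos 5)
Your proposal is correct and follows essentially the same route as the paper: verify hypothesis (2) of Theorem \ref{theoWeakConvergCylinProcesInCTInfy} from the uniform tightness of the one-dimensional projections via Remark \ref{remaUltraboTheoTight} together with inequality \eqref{eqEquiContiCylRVAndFourierTransf}, then apply that theorem to the induced cylindrical processes and identify each $X^{n}$ with its version $Y^{n}$. Your closing argument that $X^{n}$ and $Y^{n}$ share the same Radon law on $C_{\infty}(\Phi')$ (determined by the space-time finite-dimensional distributions) only spells out a step the paper leaves implicit.
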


\begin{remark}
Theorem \ref{theoWeakConvUltrabornological} generalizes Theorem 5.3 of Mitoma \cite{Mitoma:1983UT}, there formulated under the assumption that $\Phi$ is a Fr\'{e}chet nuclear space. Moreover, Theorem \ref{theoWeakConvUltrabornological} also generalizes Corollarie IV.1 of Fouque in \cite{Fouque:1984}, where $\Phi$ is assumed to be the strict inductive limit of a sequence of nuclear Fr\'{e}chet spaces.
\end{remark}

\begin{remark}
An analogue to Theorems \ref{theoUniforTightSpaceContiTimeInfty}, \ref{theoWeakConvergCylinProcesInCTInfy} and \ref{theoWeakConvUltrabornological} 
but for tightness and weak convergence in the Skorokhod space is proved by the author in (\cite{FonsecaMora:Skorokhod}).
\end{remark}
  
In this paper we give two applications for Theorem  \ref{theoWeakConvUltrabornological} . First, we introduce sufficient conditions for a sequence of continuous local martingales to converge in $C_{\infty}(\Phi')$ (see Theorem \ref{theoWeakConvContLocalMartin}). As an consequence of our result, we show a version of the central limit theorem for local martingales taking values in the dual of an ultrabornological nuclear space (see Theorem  \ref{theoCentralLimitTheorem}). This last result generalizes previous work carried our by Fierro in \cite{Fierro:1988} for local martingales taking values in the dual of a nuclear Fr\'echet space. 

For our second application, for each $n=0,1,2, \dots$ we consider a linear stochastic partial differential equation of the form 
$$
dY^{n}_{t}=(A^{n})'Y^{n}_{t} dt +dZ^{n}_{t}, 
$$
with initial condition $Y^{n}_{0}=\eta^n$; where $A^{n}$ is the infinitesimal generator of a $C_{0}$-semigroup of continuous linear operators on $\Phi'$ and $Z^{n}$ is a continuous semimartingale in $\Phi'$. Denote by $Y^{n}$  the (unique) weak solution to such initial value problem. We apply our result on weak convergence to show that under some basic conditions on $\eta^{n}$, $A^{n}$ and $Z^{n}$ (see Theorem \ref{theoWeakConvSoluSEE}), we have $Y^{n} \Rightarrow Y^{0}$ in $C_{\infty}(\Phi')$ as $n \rightarrow \infty$.

One of the main motivations for the results of this paper is  the recent progress in the theory of stochastic analysis with respect to semimartingales (see \cite{FonsecaMora:StochInteg}). In particular, we are interested in to apply our results to show existence of solutions to non-linear stochastic partial differential equations driven by multiplicative semimartingale noise. These results will appear elsewhere.

\subsection{Organization of the paper} The paper is organized as follows.  In Section \ref{sectionPrelim} we list some important notions concerning nuclear spaces and their duals, cylindrical and stochastic processes. We also introduce the basic properties of the local uniform topology on the space of continuous mappings. The proofs of Theorems 
\ref{theoUniforTightSpaceContiTimeInfty}, \ref{theoWeakConvergCylinProcesInCTInfy} and \ref{theoWeakConvUltrabornological} can be found in Section \ref{sectProofMainResults}. The central limit theorem for local martingales is the object of study of Section \ref{sectCentralLimitTheorem}. The study of weak convergence of a sequence of solutions to stochastic partial differential equations with semimartingale noise is carried our in Section  \ref{secWeakSemimart}. Finally, in Section \ref{sectExample} we give a detailed example of our main results.

\section{Preliminaries} \label{sectionPrelim}

\subsection{Nuclear spaces and their strong duals}

Let $\Phi$ be a locally convex space (we will only consider vector spaces over $\R$). $\Phi$ is called \emph{bornological} (respectively,  \emph{ultrabornological}) if it is the inductive limit of a family of normed (respectively, Banach) spaces. A \emph{barreled space} is a locally convex space such that every convex, balanced, absorbing and closed subset is a neighborhood of zero. For further details see \cite{Jarchow, NariciBeckenstein}.   

If $p$ is a continuous semi-norm on $\Phi$ and $r>0$, the closed ball of radius $r$ of $p$, given by $B_{p}(r) = \left\{ \phi \in \Phi: p(\phi) \leq r \right\}$, is a closed, convex, balanced neighborhood of zero in $\Phi$. 
A continuous seminorm (respectively norm) $p$ on $\Phi$ is called \emph{Hilbertian} if $p(\phi)^{2}=Q(\phi,\phi)$, for all $\phi \in \Phi$, where $Q$ is a symmetric non-negative bilinear form (respectively inner product) on $\Phi \times \Phi$. For any given continuous seminorm $p$ on $\Phi$ let $\Phi_{p}$ be the Banach space that corresponds to the completion of the normed space $(\Phi / \mbox{ker}(p), \tilde{p})$, where $\tilde{p}(\phi+\mbox{ker}(p))=p(\phi)$ for each $\phi \in \Phi$. We denote by  $\Phi'_{p}$ the dual to the Banach space $\Phi_{p}$ and by $p'$ the corresponding dual norm. Observe that if $p$ is Hilbertian then $\Phi_{p}$ and $\Phi'_{p}$ are Hilbert spaces. If $q$ is another continuous seminorm on $\Phi$ for which $p \leq q$, we have that $\mbox{ker}(q) \subseteq \mbox{ker}(p)$ and the canonical inclusion map from $\Phi / \mbox{ker}(q)$ into $\Phi / \mbox{ker}(p)$ has a unique continuous and linear extension that we denote by $i_{p,q}:\Phi_{q} \rightarrow \Phi_{p}$. Furthermore, we have the following relation: $i_{p}=i_{p,q} \circ i_{q}$.

We denote by $\Phi'$ the topological dual of $\Phi$ and by $\inner{f}{\phi}$ the canonical pairing of elements $f \in \Phi'$, $\phi \in \Phi$. Unless otherwise specified, $\Phi'$ will always be consider equipped with its \emph{strong topology}, i.e. the topology on $\Phi'$ generated by the family of semi-norms $( \eta_{B} )$, where for each $B \subseteq \Phi$ bounded, $\eta_{B}(f)=\sup \{ \abs{\inner{f}{\phi}}: \phi \in B \}$ for all $f \in \Phi'$.  

Let $p$ and $q$ be continuous Hilbertian semi-norms on $\Phi$ such that $p \leq q$. The space of continuous linear operators (respectively Hilbert-Schmidt operators) from $\Phi_{q}$ into $\Phi_{p}$ is denoted by $\mathcal{L}(\Phi_{q},\Phi_{p})$ (respectively $\mathcal{L}_{2}(\Phi_{q},\Phi_{p})$). We employ an analogous notation for operators between the dual spaces $\Phi'_{p}$ and $\Phi'_{q}$. 

Let us recall that a (Hausdorff) locally convex space $(\Phi,\mathcal{T})$ is called \emph{nuclear} if its topology $\mathcal{T}$ is generated by a family $\Pi$ of Hilbertian semi-norms such that for each $p \in \Pi$ there exists $q \in \Pi$, satisfying $p \leq q$ and the canonical inclusion $i_{p,q}: \Phi_{q} \rightarrow \Phi_{p}$ is Hilbert-Schmidt. Other equivalent definitions of nuclear spaces can be found in \cite{Pietsch, Treves}. 

Let $\Phi$ be a nuclear space. If $p$ is a continuous Hilbertian semi-norm  on $\Phi$, then the Hilbert space $\Phi_{p}$ is separable (see \cite{Pietsch}, Proposition 4.4.9 and Theorem 4.4.10, p.82). Now, let $( p_{n} : n \in \N)$ be an increasing sequence of continuous Hilbertian semi-norms on $(\Phi,\mathcal{T})$. We denote by $\theta$ the locally convex topology on $\Phi$ generated by the family $( p_{n} : n \in \N)$. The topology $\theta$ is weaker than $\mathcal{T}$. We  will call $\theta$ a (weaker) \emph{countably Hilbertian topology} on $\Phi$ and we denote by $\Phi_{\theta}$ the space $(\Phi,\theta)$ and by $\widehat{\Phi}_{\theta}$ its completion. The space $\widehat{\Phi}_{\theta}$ is a (not necessarily Hausdorff) separable, complete, pseudo-metrizable (hence Baire and ultrabornological; see Example 13.2.8(b) and Theorem 13.2.12 in \cite{NariciBeckenstein}) locally convex space and its dual space satisfies $(\widehat{\Phi}_{\theta})'=(\Phi_{\theta})'=\bigcup_{n \in \N} \Phi'_{p_{n}}$ (see \cite{FonsecaMora:Existence}, Proposition 2.4). 

The class of nuclear spaces includes many spaces of functions widely used in analysis. Indeed, it is known (see e.g. \cite{Pietsch, Schaefer, Treves}) that the spaces of test functions $\mathscr{E}_{K} \defeq \mathcal{C}^{\infty}(K)$ ($K$: compact subset of $\R^{d}$), $\mathscr{E}\defeq \mathcal{C}^{\infty}(\R^{d})$, the rapidly decreasing functions $\mathscr{S}(\R^{d})$, and the space of harmonic functions $\mathcal{H}(U)$ ($U$: open subset of $\R^{d}$),  are all  examples of Fr\'{e}chet nuclear spaces. Their (strong) dual spaces $\mathscr{E}'_{K}$, $\mathscr{E}'$, $\mathscr{S}'(\R^{d})$, $\mathcal{H}'(U)$, are also nuclear spaces.
On the other hand, the space of test functions $\mathscr{D}(U) \defeq \mathcal{C}_{c}^{\infty}(U)$ ($U$: open subset of $\R^{d}$), the space of polynomials $\mathcal{P}_{n}$ in $n$-variables, the space of real-valued sequences $\R^{\N}$ (with direct sum topology) are strict inductive limits of Fr\'{e}chet nuclear spaces (hence they are also nuclear). The space of distributions  $\mathscr{D}'(U)$  ($U$: open subset of $\R^{d}$) is also nuclear.   
All the above are examples of (complete) ultrabornological nuclear spaces.  

\subsection{Cylindrical and stochastic processes}
Let $E$ be a topological space and denote by $\mathcal{B}(E)$ its Borel $\sigma$-algebra. Recall that a Borel measure $\mu$ on $E$ is called a \emph{Radon measure} if for every $\Gamma \in \mathcal{B}(E)$ and $\epsilon >0$, there exist a compact set $K \subseteq \Gamma$ such that $\mu(\Gamma \backslash K) < \epsilon$. We denote by $\goth{M}_{R}^{1}(E)$ the space of all Radon probability measures on $E$.
%In general not every Borel measure on $E$ is Radon.

Throughout this work we assume that $\ProbSpace$ is a complete probability space and consider a filtration $( \mathcal{F}_{t} : t \geq 0)$ on $\ProbSpace$ that satisfies the \emph{usual conditions}, i.e. it is right continuous and $\mathcal{F}_{0}$ contains all subsets of sets of $\mathcal{F}$ of $\Prob$-measure zero. The space $L^{0} \ProbSpace$ of equivalence classes of real-valued random variables defined on $\ProbSpace$ will always be considered equipped with the topology of convergence in probability and in this case it is a complete, metrizable, topological vector space.

Let $\Phi$ be a locally convex space. A \emph{cylindrical random variable}\index{cylindrical random variable} in $\Phi'$ is a linear map $X: \Phi \rightarrow L^{0} \ProbSpace$ (see \cite{FonsecaMora:Existence}). Let $X$ be a $\Phi'$-valued random variable, i.e. $X:\Omega \rightarrow \Phi'$ is a $\mathscr{F}/\mathcal{B}(\Phi')$-measurable map. For each $\phi \in \Phi$ we denote by $\inner{X}{\phi}$ the real-valued random variable defined by $\inner{X}{\phi}(\omega) \defeq \inner{X(\omega)}{\phi}$, for all $\omega \in \Omega$. The linear mapping $\phi \mapsto \inner{X}{\phi}$ is called the \emph{cylindrical random variable induced/defined by} $X$.

%If $X$ is a cylindrical random variable in $\Phi'$, we say that $X$ is \emph{$n$-integrable} ($n \in \N$)  if $ \Exp \left( \abs{X(\phi)}^{n} \right)< \infty$, $\forall \, \phi \in \Phi$, and has \emph{zero-mean} if $ \Exp \left( X(\phi) \right)=0$, $\forall \phi \in \Phi$. 
%The \emph{Fourier transform} of $X$ is the map from $\Phi$ into $\C$ given by $\phi \mapsto \Exp ( e^{i X(\phi)})$.
%We will say that a $\Phi'$-valued random variable $X$ is \emph{$n$-integrable} ($n \in \N$) if the cylindrical random variable induced by $X$ is \emph{$n$-integrable}. 

Let $J=\R_{+} \defeq [0,\infty)$ or $J=[0,T]$ for  $T>0$. We say that $X=( X_{t}: t \in J)$ is a \emph{cylindrical process} in $\Phi'$ if $X_{t}$ is a cylindrical random variable for each $t \in J$. Clearly, any $\Phi'$-valued stochastic processes $X=( X_{t}: t \in J)$ induces/defines a cylindrical process under the prescription: $\inner{X}{\phi}=( \inner{X_{t}}{\phi}: t \in J)$, for each $\phi \in \Phi$. 

If $X$ is a cylindrical random variable in $\Phi'$, a $\Phi'$-valued random variable $Y$ is called a \emph{version} of $X$ if for every $\phi \in \Phi$, $X(\phi)=\inner{Y}{\phi}$ $\Prob$-a.e. A $\Phi'$-valued process $Y=(Y_{t}:t \in J)$ is said to be a $\Phi'$-valued \emph{version} of the cylindrical process $X=(X_{t}: t \in J)$ on $\Phi'$ if for each $t \in J$, $Y_{t}$ is a $\Phi'$-valued version of $X_{t}$.

A $\Phi'$-valued process $X=( X_{t}: t \in J)$ is  \emph{continuous} (respectively \emph{c\`{a}dl\`{a}g}) if for $\Prob$-a.e. $\omega \in \Omega$, the \emph{sample paths} $t \mapsto X_{t}(\omega) \in \Phi'$ of $X$ are continuous (respectively c\`{a}dl\`{a}g). 

%The $\Phi'$-valued process $X=( X_{t}: t \geq 0)$ is \emph{adapted} if $\omega \mapsto X_{t}(\omega)$ is $\mathcal{F}_{t}/\mathcal{B}(\Phi')$ measurable for each $t \geq 0$. We say $X$ is \emph{predictable} if for each $T \geq 0$ the mapping $(s,\omega) \mapsto X_{s}(\omega)$ is $\mathcal{P}_{T}/\mathcal{B}(\Phi')$-measurable.  

A $\Phi'$-valued random variable $X$ is called \emph{regular} if there exists a weaker countably Hilbertian topology $\theta$ on $\Phi$ such that $\Prob( \omega: X(\omega) \in (\widehat{\Phi}_{\theta})')=1$. If $\Phi$ is a barrelled (e.g. ultrabornological) nuclear space, the property of being regular is  equivalent to the property that the law of $X$ be a Radon measure on $\Phi'$ (see Theorem 2.10 in \cite{FonsecaMora:Existence}).  A $\Phi'$-valued process $X=(X_{t}:t \geq 0)$ is said to be \emph{regular} if for each $t \geq 0$, $X_{t}$ is a regular random variable.

\subsection{The space of continuous mappings with the topology of local uniform convergence}\label{sectSpaceContMappins}

Let $\Phi$ denote a Hausdorff locally convex space and let $T>0$. We denote by $C_{T}(\Phi')$ the linear space of all the $\Phi'$-valued continuous mappings $x:[0,T] \rightarrow \Phi'$, equipped with the \emph{topology of uniform convergence on $[0,T]$}, that is, the topology generated by the seminorms
$$ x \mapsto \sup_{t \in [0,T]} p(x(t)), \quad \forall p \in \Pi,$$
where $\Pi$ is a family of continuous seminorms generating the topology on $\Phi'$. 

%The reader must be aware that even thought $C_{T}(\Phi')$ is locally convex, our definitions below do not coincide with the usual definition of cylindrical random variables and measures. 

We will require the following notions of cylindrical random variable and cylindrical measure on $C_{T}(\Phi')$. 
Given $m \in \N$, $\phi_{1}, \dots, \phi_{m} \in \Phi$, $t_{1}, \dots, t_{m} \in [0,T]$,  we define the \emph{space-time projection map} 
$\Pi^{\phi_{1}, \dots, \phi_{m}}_{t_{1}, \dots, t_{m}}: C_{T}(\Phi') \rightarrow \R^{m}$ by 
\begin{equation*}
\Pi^{\phi_{1}, \dots, \phi_{m}}_{t_{1}, \dots, t_{m}}(x)  =
( \inner{x(t_{1})}{\phi_{1}}, \dots, \inner{x(t_{m})}{\phi_{m}}), \quad \forall \, x \in C_{T}(\Phi').
\end{equation*} 
If $M=\{\phi_{1}, \dots, \phi_{m} \} \subseteq  \Phi $, $I=\{ 
t_{1}, \dots, t_{m} \} \subseteq  [0,T]$ and $B \in \mathcal{B}(\R^{m})$, the set $\mathcal{Z}(M,I,B) = \left( \Pi^{\phi_{1}, \dots, \phi_{m}}_{t_{1}, \dots, t_{m}} \right)^{-1}(B) $
%\begin{eqnarray*}
%\mathcal{Z}(M,I,B) 
%& \defeq & \left( \Pi^{\phi_{1}, \dots, \phi_{m}}_{t_{1}, \dots, t_{m}} \right)^{-1}(B) 
%\\
%&= & \{ x \in C_{T}(\Phi'): ( \inner{x(t_{1})}{\phi_{1}}, \dots, \inner{x(t_{m})}{\phi_{m}}) \in B\},
%\end{eqnarray*}  
is called a \emph{cylinder set} in $C_{T}(\Phi')$ based on $(M,I)$. 
 The collection 
$ \mathscr{C}(C_{T}(\Phi');M,I)=\{ \mathcal{Z}(M,I,B): B \in \mathcal{B}(\R^{m})\}$ is a $\sigma$-algebra, called the \emph{cylindrical $\sigma$-algebra in $C_{T}(\Phi')$ based on} $(M,I)$.
The \emph{cylindrical $\sigma$-algebra}  
$ \mathscr{C}(C_{T}(\Phi'))$ is the $\sigma$-algebra generated by the cylinder sets. We have $\mathscr{C}(C_{T}(\Phi')) \subseteq  \mathcal{B}(C_{T}(\Phi'))$ and in some cases we have equality (see Proposition \ref{propProperContiSpaceWeakerCHT} below).

A \emph{cylindrical (probability) measure} on $C_{T}(\Phi')$ is a map $\mu: \mathcal{Z}(C_{T}(\Phi')) \rightarrow [0,+\infty]$ such that for  finite $M \subseteq \Phi$ and $I \subseteq [0,T]$, the restriction of $\mu$ to  $\mathscr{C}(C_{T}(\Phi');M,I)$ is a (probability) measure. Any probability measure on $C_{T}(\Phi')$ is a cylindrical measure. 

 A \emph{cylindrical random variable} in $C_{T}(\Phi')$ (defined on a probability space $\ProbSpace$) is a linear map $X: \Phi \rightarrow L^{0}(\Omega, \mathcal{F}, \Prob;C_{T}(\R))$, where $L^{0}(\Omega, \mathcal{F}, \Prob;C_{T}(\R))$ is the set of $C_{T}(\R)$-valued random variables defined on $\ProbSpace$.
 
The \emph{cylindrical distribution} of $X$ is the cylindrical probability measure on $C_{T}(\Phi')$ satisfying:
$$\mu_{X} \left( \left( \Pi^{\phi_{1}, \dots, \phi_{m}}_{t_{1}, \dots, t_{m}} \right)^{-1}(B) \right) = \Prob \circ X^{-1} \circ \left( \Pi^{\phi_{1},\dots, \phi_{m}}_{t_{1}, \dots, t_{m}}\right)^{-1}(B), $$
%\begin{eqnarray*}
%& \defeq & \Prob \left( ( X(\phi_{1})(t_{1}), \dots, X(\phi_{m})(t_{m}) \in B  \right) \\
%& = & 
%\end{eqnarray*} 
for every $m \in \N$, $\phi_{1}, \dots, \phi_{m} \in  \Phi$, $t_{1}, \dots, t_{m} \in [0,T]$ and $B \in \mathcal{B}(\R^{m})$. By following the same arguments used in the proof of Theorem 4.5 in \cite{FonsecaMora:Skorokhod} (there for the Skorokhod space) it can be shown that to every cylindrical probability measure on $C_{T}(\Phi')$ there corresponds a canonical  cylindrical random variable in $C_{T}(\Phi')$.

Given $\phi \in \Phi$, we define the \emph{space projection} $\Pi_{\phi}: C_{T}(\Phi') \rightarrow C_{T}(\R)$ as the mapping $\Pi_{\phi}(x)=\inner{x}{\phi} \defeq (t \mapsto \inner{x(t)}{\phi})$. Likewise, given $t \in [0,T]$, define the \emph{time projection} $\Pi_{t}: C_{T}(\Phi') \rightarrow \Phi'$ as the mapping $\Pi_{t}(x)=x(t)$. For a cylindrical measure $\mu$ on $C_{T}(\Phi)$ we define the Fourier transform $\hat{\mu}_{t}$ at time $t$ as the Fourier transform of $\mu_{t} \defeq \mu \circ \Pi^{-1}_{t}$, that is 
$$ \hat{\mu}_{t}(\phi)= \int_{C_{T}(\Phi')} e^{i \inner{x(t)}{\phi}} \, d \mu, \quad \forall \, \phi \in \Phi. $$
The Fourier transform $\hat{\mu}_{X,t}$ at time $t$ of the cylindrical random variable $X$ in $C_{T}(\Phi')$ is that of its cylindrical measure $\mu_{X}$ at time $t$. 

% Below we summarize the relationship between the different types of random elements in $C_{T}(\Phi')$: 
%\begin{enumerate}[label=(\alph*)]
%\item $X$ is a cylindrical random variable in $C_{T}(\Phi')$ if and only if the linear map  $\phi \mapsto ( X(\phi)(t) : t \in [0,T])$ is a cylindrical processes in $\Phi'$ with the property $X(\phi) \in C_{T}(\R)$ for each $\phi \in \Phi$.  
%\item If $X$ is a $C_{T}(\Phi')$-valued random variable, then $X$  determines a $\Phi'$-valued continuous process  $X_{t} \defeq \Pi_{t} (X)$  $\forall \,  t \in [0,T]$ and defines a cylindrical random variable in $C_{T}(\Phi')$ given by $\phi \mapsto \Pi_{\phi}(X)$. 
%\item If $X=(X_{t}: t \in [0,T])$ is a $\Phi'$-valued continuous process then $X$ defines a map $X:\Omega \rightarrow C_{T}(\Phi')$ by means of its paths $\omega \mapsto (t \mapsto X(\omega)(t) \defeq X_{t}(\omega))$. This map is $\mathcal{F}/\mathcal{C}(C_{T}(\Phi'))$-measurable. 
%However, because the inclusion
%$\mathcal{C}(C_{T}(\Phi')) \subseteq  \mathcal{B}(C_{T}(\Phi'))$ might be strict, then $X$ is not necessarily a $C_{T}(\Phi')$-valued random variable. 
%\end{enumerate}

Let $q$ be a continuous seminorm on $\Phi$. We will need the following \emph{modulus of continuity}:
\begin{enumerate}
\item If $x \in C_{T}(\Phi'_{q})$, $\delta>0$, let 
$$w_{x}(\delta,q)=\sup \{ q'(x(t)-x(s)): s,t \in [0,T], \, \abs{s-t}< \delta \}, $$
\item If $x \in C_{T}(\Phi'_{q})$, $\phi \in \Phi_{q}$, $\delta>0$, let 
$$w_{x}(\delta,\phi)=\sup \{ \abs{\inner{x(t)-x(s)}{\phi}}: s,t \in [0,T], \, \abs{s-t}< \delta \}, $$
%\item If $x \in C_{T}(\R)$, $\delta >0$, let 
%$$w_{x}(\delta)= \sup \{ \abs{x(t)-x(s)}: s,t \in [0,T], \, \abs{s-t}< \delta \}. $$
\end{enumerate}

In the following theorem a characterization for the compact subsets in $C_{T}(\Phi')$ is established under the assumption that $\Phi$ is nuclear. Its proof follows from the same arguments used in the proof of Theorem 3.5 in \cite{FonsecaMora:Skorokhod} where one can just replace the Skorokhod space $D_{T}(\Phi')$ by the space $C_{T}(\Phi')$ and the modulus of continuity $w'_{x}$ by $w_{x}$ (see Section 3 in \cite{FonsecaMora:Skorokhod} for definitions on the Skorokhod space $D_{T}(\Phi')$ and modulus of continuity $w'_{x}$). %We leave the details to the reader.  

\begin{theorem}\label{theoCharactCompactSubsets}
Let $\Phi$ be a nuclear space and let $A \subseteq C_{T}(\Phi')$. Consider the following statements:
\begin{enumerate}
\item $A$ is compact in $C_{T}(\Phi')$. 
\item For any $\phi \in \Phi$, the set $\Pi_{\phi}(A)=\{ \inner{x}{\phi}: x \in A\}$ is compact in $C_{T}(\R)$.
\item There exists a continuous Hilbertian seminorm $q$ on $\Phi$ such that $A$ is compact in $C_{T}(\Phi'_{q})$. 
\end{enumerate}
Then we have (1) $\Rightarrow$ (2), (3) $\Rightarrow$ (1),  and if $\Phi$ is also barrelled we have (2) $\Rightarrow$ (3). 
\end{theorem}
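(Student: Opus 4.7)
My plan is to treat the two easy implications quickly and then concentrate on (2) $\Rightarrow$ (3). For (1) $\Rightarrow$ (2), the space projection $\Pi_{\phi}: C_{T}(\Phi') \to C_{T}(\R)$ is continuous, because uniform convergence in $\Phi'$ dominates uniform convergence of the scalar pairing with $\phi$, so it sends the compact set $A$ to a compact subset of $C_{T}(\R)$. For (3) $\Rightarrow$ (1), the transpose of the continuous dense inclusion $\Phi \to \Phi_{q}$ yields a continuous linear map $\Phi'_{q} \to \Phi'$ between strong duals, and this induces a continuous map $C_{T}(\Phi'_{q}) \to C_{T}(\Phi')$ that carries the compact set $A$ to a compact subset of $C_{T}(\Phi')$.

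For (2) $\Rightarrow$ (3), the strategy is to manufacture a single continuous Hilbertian seminorm $q$ for which an Arzel\`a--Ascoli argument closes in the Hilbert space $\Phi'_{q}$. Condition (2) forces $\sup_{x \in A,\, t \in [0,T]} \abs{\inner{x(t)}{\phi}} < \infty$ for every $\phi \in \Phi$, so the family $\{x(t): x \in A,\, t \in [0,T]\}$ is pointwise, hence strongly, bounded in $\Phi'$. Barrelledness of $\Phi$ then promotes this to equicontinuity on $\Phi$, and nuclearity lets me take the dominating seminorm to be a continuous Hilbertian $p$ with $\sup_{x \in A,\, t \in [0,T]} p'(x(t)) \leq 1$. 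Invoking nuclearity once more, I choose a continuous Hilbertian $q \geq p$ for which $i_{p,q}: \Phi_{q} \to \Phi_{p}$ is Hilbert--Schmidt; the dual $i'_{p,q}: \Phi'_{p} \to \Phi'_{q}$ is then Hilbert--Schmidt and in particular compact, so $\{x(t): x \in A,\, t \in [0,T]\}$ sits inside a relatively compact subset of $\Phi'_{q}$.

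It remains to establish equicontinuity of $A$ in $C_{T}(\Phi'_{q})$, namely $\lim_{\delta \to 0} \sup_{x \in A} w_{x}(\delta, q) = 0$. Here I would pick a sequence $\{\phi_{k}\} \subseteq \Phi$ whose images form an orthonormal basis of $\Phi_{q}$ (density of $\Phi$ in $\Phi_{q}$ together with Gram--Schmidt), so that Parseval yields $q'(x(t) - x(s))^{2} = \sum_{k} \abs{\inner{x(t) - x(s)}{\phi_{k}}}^{2}$. The tail is controlled by the bound $\abs{\inner{x(t) - x(s)}{\phi_{k}}} \leq 2\, p(\phi_{k})$ obtained in the previous step, combined with the summability $\sum_{k} p(\phi_{k})^{2} < \infty$, which is exactly the Hilbert--Schmidt property of $i_{p,q}$; for the truncation to $k \leq N$, condition (2) makes each $\Pi_{\phi_{k}}(A)$ compact and therefore equicontinuous in $C_{T}(\R)$ by classical Arzel\`a--Ascoli, so $\sup_{x \in A} w_{x}(\delta, \phi_{k}) \to 0$ as $\delta \to 0$ for each fixed $k$. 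Arzel\`a--Ascoli in $\Phi'_{q}$ then yields relative compactness of $A$, and closedness follows from a standard separation argument using the closedness of each $\Pi_{\phi}(A)$. The step I expect to be the main obstacle is precisely this equicontinuity estimate, where pointwise equicontinuity along each $\phi_{k}$ has to be combined into a uniform bound for the Hilbert norm $q'$; the Hilbert--Schmidt nature of the nuclear embedding $i_{p,q}$ is exactly what makes the tail summable uniformly in $x \in A$. This argument mirrors the proof of Theorem 3.5 in \cite{FonsecaMora:Skorokhod}, with the modulus $w_{x}$ replacing the Skorokhod modulus $w'_{x}$.
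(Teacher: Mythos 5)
Your treatment of (1) $\Rightarrow$ (2) and (3) $\Rightarrow$ (1) is correct, and the core of your argument for (2) $\Rightarrow$ (3) --- pointwise boundedness from (2), equicontinuity of $\{x(t): x \in A,\, t \in [0,T]\}$ via barrelledness, a continuous Hilbertian seminorm $p$ with $\sup_{x,t} p'(x(t)) \leq 1$, a Hilbert--Schmidt embedding $i_{p,q}$, and the Parseval/tail estimate combined with Arzel\`a--Ascoli in $\Phi'_{q}$ --- is exactly the route the paper intends: the paper gives no self-contained proof here, only the instruction to transplant the argument of Theorem 3.5 of \cite{FonsecaMora:Skorokhod} with $w_{x}$ in place of $w'_{x}$, and your write-up supplies precisely those details. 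Up to the conclusion that $A$ is \emph{relatively} compact in $C_{T}(\Phi'_{q})$, your proof is complete and correct.

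The gap is the final sentence, ``closedness follows from a standard separation argument using the closedness of each $\Pi_{\phi}(A)$.'' No such argument exists: compactness (hence closedness) of every projection $\Pi_{\phi}(A)$ does \emph{not} force $A$ to be closed. Already for $\Phi = \Phi' = \R^{2}$ (nuclear and barrelled), take $A$ to be the set of constant paths $t \mapsto z$ with $z$ in the punctured disc $\{ z : 0 < \norm{z} \leq 1 \}$; for every $\phi \neq 0$ the set $\Pi_{\phi}(A)$ consists of the constant paths with values filling the whole interval $[-\norm{\phi}, \norm{\phi}]$ (the value $0$ is attained by any nonzero $z \perp \phi$), hence is compact in $C_{T}(\R)$, while $A$ itself is not closed because the zero path lies in its closure. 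The reason the ``separation'' idea fails is that if $x_{m} \to x$ with $x_{m} \in A$, closedness of $\Pi_{\phi}(A)$ only produces, for each $\phi$ separately, some $y_{\phi} \in A$ with $\inner{x(\cdot)}{\phi} = \inner{y_{\phi}(\cdot)}{\phi}$, and these witnesses need not coincide. So your argument in fact proves relative compactness of $A$ in $C_{T}(\Phi'_{q})$; to obtain compactness of $A$ itself one must either add the hypothesis that $A$ is closed in $C_{T}(\Phi')$ (then the closure of $A$ in $C_{T}(\Phi'_{q})$ injects continuously into $C_{T}(\Phi')$ and lands inside $\overline{A} = A$, so $A$ is already closed in $C_{T}(\Phi'_{q})$), or read ``compact'' as ``relatively compact'' in (2). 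This suffices for every use of the theorem in the paper, where the set in question is always an honest compact, hence closed, subset of $C_{T}(\Phi')$, but it should be stated rather than buried in an unsupported closedness claim.
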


In many of our arguments we will require the following properties of the space $C_{T}((\widehat{\Phi}_{\theta})')$ where $\theta$ is a weaker countably Hilbertian topology on $\Phi$.

\begin{proposition} \label{propProperContiSpaceWeakerCHT}
Let $\theta$ be a weaker countably Hilbertian topology on $\Phi$. Then, 
\begin{enumerate}
\item $C_{T}((\widehat{\Phi}_{\theta})')$ is Suslin.
\item $\mathscr{C}(C_{T}((\widehat{\Phi}_{\theta})')) =  \mathcal{B}(C_{T}((\widehat{\Phi}_{\theta})'))$
\item The compact subsets of $C_{T}((\widehat{\Phi}_{\theta})')$ are metrizable. 
\end{enumerate}
\end{proposition}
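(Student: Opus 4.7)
The plan is to exploit two structural facts about the space $\widehat{\Phi}_{\theta}$: first, its strong dual decomposes as a countable union $(\widehat{\Phi}_{\theta})' = \bigcup_{n\in\N}\Phi'_{p_{n}}$ (Proposition 2.4 of \cite{FonsecaMora:Existence}), with each $\Phi'_{p_{n}}$ a separable Hilbert space by nuclearity of $\Phi$; second, by replacing the defining sequence $(p_{n})$ with a cofinal refinement chosen via the nuclearity of $\Phi$, we may assume without loss of generality that each canonical map $i_{p_{n},p_{n+1}}\colon\Phi_{p_{n+1}}\to\Phi_{p_{n}}$ is Hilbert--Schmidt, so that $\widehat{\Phi}_{\theta}$ is (a Hausdorff quotient of) a countably Hilbertian nuclear space. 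This refinement is what will allow us to invoke Theorem \ref{theoCharactCompactSubsets} with $\widehat{\Phi}_{\theta}$ playing the role of the nuclear space, and it is the observation that makes parts (1) and (3) tractable.

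For part~(1), the first step is to note that since $\widehat{\Phi}_{\theta}$ is pseudo-metrizable, every strongly bounded subset of $(\widehat{\Phi}_{\theta})'$ is equicontinuous and hence contained in a bounded subset of some $\Phi'_{p_{n}}$. Applying this to the compact image $x([0,T])$ of any $x\in C_{T}((\widehat{\Phi}_{\theta})')$, one sees that $x$ takes its values in some $\Phi'_{p_{n}}$; the Hilbert--Schmidtness of the transposes $i'_{p_{n},p_{n+1}}\colon\Phi'_{p_{n}}\to\Phi'_{p_{n+1}}$ then allows us to upgrade continuity in the strong topology of $(\widehat{\Phi}_{\theta})'$ to norm continuity as a map into $\Phi'_{p_{n+1}}$. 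This yields $C_{T}((\widehat{\Phi}_{\theta})')=\bigcup_{n\in\N} C_{T}(\Phi'_{p_{n}})$, where each summand is a separable Banach space (hence Polish) continuously injected into the whole; since a countable union of Suslin subspaces along continuous inclusions is Suslin, part~(1) follows.

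Part~(2) is immediate in the direction $\mathscr{C}\subseteq\mathcal{B}$ by continuity of the space-time projections; for the reverse inclusion I would fix a countable subset $D\subset\Phi$ that is dense in each $\Phi_{p_{n}}$ and observe that the countable family $\{x\mapsto\inner{x(t)}{\phi}:\phi\in D,\ t\in\Q\cap[0,T]\}$ consists of continuous, cylindrically measurable functions that separate points of $C_{T}((\widehat{\Phi}_{\theta})')$; combined with the Suslin property from~(1), the standard fact that a countable separating family of Borel functions on a Suslin space generates the Borel $\sigma$-algebra then forces $\mathcal{B}\subseteq\mathscr{C}$. For part~(3), Theorem \ref{theoCharactCompactSubsets} applied to the nuclear and (being ultrabornological) barrelled space $\widehat{\Phi}_{\theta}$ shows that any compact $A\subseteq C_{T}((\widehat{\Phi}_{\theta})')$ is compact in $C_{T}((\widehat{\Phi}_{\theta})'_{q})$ for some continuous Hilbertian seminorm $q$ on $\widehat{\Phi}_{\theta}$; since the latter is a separable Banach space, $A$ inherits metrizability. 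The main technical obstacle I anticipate is the upgrade from strong-topology continuity to norm continuity in part~(1): the containment of $x([0,T])$ in some $\Phi'_{p_{n}}$ follows directly from equicontinuity, but promoting continuity to the norm topology of a Hilbert factor requires careful use of the compactness of the transposed Hilbert--Schmidt inclusions together with the coincidence of strong and Hilbert-norm topologies on equicontinuous subsets.
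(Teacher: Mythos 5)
Your overall strategy (decompose $(\widehat{\Phi}_{\theta})'=\bigcup_{n}\Phi'_{p_{n}}$, reduce to separable Banach pieces, and use a countable separating family on a Suslin space for the Borel/cylindrical identity) is genuinely different from the paper's proof, which simply embeds $C_{T}((\widehat{\Phi}_{\theta})')$ as a closed subspace of the Skorokhod space $D_{T}((\widehat{\Phi}_{\theta})')$ via Proposition 1.6(1) of \cite{Jakubowski:1986} and imports the three properties from Proposition 3.2 and Lemma 4.2 of \cite{FonsecaMora:Skorokhod}. However, your argument has a genuine gap at its foundation: the ``WLOG'' replacement of $(p_{n})$ by a cofinal refinement with Hilbert--Schmidt links is not harmless. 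A weaker countably Hilbertian topology $\theta$ is generated by an \emph{arbitrary} increasing sequence of continuous Hilbertian seminorms; nuclearity of $\Phi$ only guarantees that for each $p_{n}$ there is \emph{some} continuous Hilbertian seminorm $q$ on $\Phi$ dominating $p_{n}$ with $i_{p_{n},q}$ Hilbert--Schmidt, and this $q$ need not be $\theta$-continuous. Adjoining such seminorms produces a strictly finer topology $\theta'$, a strictly larger dual $(\widehat{\Phi}_{\theta'})'\supseteq(\widehat{\Phi}_{\theta})'$, and a different space $C_{T}((\widehat{\Phi}_{\theta'})')$. The natural inclusion $C_{T}((\widehat{\Phi}_{\theta})')\hookrightarrow C_{T}((\widehat{\Phi}_{\theta'})')$ is continuous but not a topological embedding, so while metrizability of compacta would descend from $\theta'$ to $\theta$ (a continuous bijection of a compactum onto a metrizable Hausdorff space is a homeomorphism), neither the Suslin property of part (1) nor the identity of part (2) transfers back: a finer topology on a Suslin space need not be Suslin, and the Borel $\sigma$-algebra can only grow.

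The same issue undermines the two places where you actually use the refinement. For part (3) you invoke Theorem \ref{theoCharactCompactSubsets} with $\widehat{\Phi}_{\theta}$ ``playing the role of the nuclear space,'' but $\widehat{\Phi}_{\theta}$ is not nuclear in general --- take $\theta$ generated by a single $L^{2}$-type Hilbertian norm on $\mathscr{S}(\R^{d})$, so that $\widehat{\Phi}_{\theta}$ is an infinite-dimensional Hilbert space --- and the implication (2)$\Rightarrow$(3) of that theorem is only asserted for $\Phi$ nuclear and barrelled. For part (1), the decomposition $C_{T}((\widehat{\Phi}_{\theta})')=\bigcup_{n}C_{T}(\Phi'_{p_{n}})$ (with norm continuity in the summands) relies on the transposed inclusions $i'_{p_{n},p_{m}}$ being compact in order to upgrade weak continuity with bounded range to norm continuity; for an arbitrary generating sequence no such compact link inside the sequence exists, so the decomposition itself is unproved. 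The containment of each path's image in a bounded subset of some $\Phi'_{p_{n}}$, and the separating-family argument for part (2) granted Suslinness, are fine; to repair the proof you would either have to establish the decomposition without compact links (which is essentially the content of Proposition 3.2 of \cite{FonsecaMora:Skorokhod}) or follow the paper and quote the Skorokhod-space results directly.
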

\begin{proof}
By Proposition 1.6(1) in \cite{Jakubowski:1986}, $C_{T}((\widehat{\Phi}_{\theta})')$ is a closed subspace of the Skorokhod space $D_{T}((\widehat{\Phi}_{\theta})')$ and the Skorokhod topology coincides with the uniform topology. Since $D_{T}((\widehat{\Phi}_{\theta})')$ is Suslin and each compact subset of it is metrizable (see Proposition 3.2 and Lemma 4.2 in \cite{FonsecaMora:Skorokhod}), then $C_{T}((\widehat{\Phi}_{\theta})')$ inherits these properties and thus (1)-(3) follows.  
\end{proof}

We denote by $C_{\infty}(\Phi')$ the collection of all continuous mappings $x:[0,\infty) \rightarrow \Phi'$. We equip $C_{\infty}(\Phi')$ with the topology of \emph{local uniform convergence} (also known as the topology of  \emph{uniform convergence on compact intervals of time}), i.e. if $\Pi$ is any generating family of seminorms for the topology on $\Phi'$, the topology on $C_{\infty}(\Phi')$ is the locally convex topology generated by the family of seminorms
$$ x \mapsto \sup_{t \in [0,T]} p(x(t)), \quad \forall \, p \in \Pi, \, T >0. $$

For every $n \in \N$, let $r_{n}: C_{\infty}(\Phi') \rightarrow C_{n}(\Phi')$, $r_{n}(x)(t)=x(t)$, $0 \leq t \leq n$. Each $r_{n}$ is linear and continuous. Then the topology on $C_{\infty}(\Phi')$ is the projective limit topology with respect to the family $(C_{n}(\Phi'), r_{n} : n \in \N)$. Furthermore, the results in Proposition \ref{propProperContiSpaceWeakerCHT} are also valid for the space $C_{\infty}((\widehat{\Phi}_{\theta})')$.

\section{Proof of our main results}\label{sectProofMainResults}

\begin{assumption}
Unless otherwise indicated, in this section we will always assume that $\Phi$ is a nuclear space.  
\end{assumption}

In this section we give a proof of our main theorems in Section \ref{sectMainTheorems}. We will start with the proof of Theorem \ref{theoUniforTightSpaceContiTimeInfty}, which for  the reader's convenience will be divided in two main steps, the next lemma being the first of these. 

\begin{lemma}\label{lemmaUniforTightSpaceContiOnT}
For every $T>0$ there exists a weaker countably Hilbertian topology $\theta$ on $\Phi$ such that  $(\mu_{\alpha} : \alpha \in A)$ is uniformly tight on $C_{T}((\widehat{\Phi}_{\theta})')$, and in particular is uniformly tight on $C_{T}(\Phi')$.
\end{lemma}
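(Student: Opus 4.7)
My plan is to use the equicontinuity of the Fourier transforms (condition \ref{condEquiFourTrans}) to select a single weaker countably Hilbertian topology $\theta$ on $\Phi$ for which every $\mu_{\alpha}$ defines a Radon probability measure on $C_{T}((\widehat{\Phi}_{\theta})')$, and then to combine this regularization with the one-dimensional tightness (condition \ref{condFinDimenTight}) to extract, for each $\epsilon > 0$, a single compact subset of $C_{T}((\widehat{\Phi}_{\theta})')$ of $\mu_{\alpha}$-measure exceeding $1-\epsilon$ uniformly in $\alpha$.

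For the construction of $\theta$, I apply condition \ref{condEquiFourTrans} to a sequence $\epsilon_{n} \downarrow 0$ to obtain, for each $n \in \N$, a continuous Hilbertian seminorm $p_{n}$ on $\Phi$ such that $p_{n}(\phi) \leq 1$ implies $|\hat{\mu}_{\alpha,t}(\phi) - 1| < \epsilon_{n}$ for all $\alpha \in A$ and $t \in [0,T]$. Using the nuclearity of $\Phi$, I enlarge each $p_{n}$ so that the sequence is increasing and the canonical inclusions $i_{p_{n},p_{n+1}} : \Phi_{p_{n+1}} \to \Phi_{p_{n}}$ are Hilbert--Schmidt, and let $\theta$ be the countably Hilbertian topology generated by $(p_{n})_{n \in \N}$. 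The uniform control on the Fourier transforms with respect to this family then allows me to invoke the regularization theorem (Theorem 3.2 of \cite{FonsecaMora:Existence}) and conclude that each $\mu_{\alpha}$ extends to a Radon probability measure on $C_{T}((\widehat{\Phi}_{\theta})')$.

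Given $\epsilon > 0$, I would exploit the separability of $\widehat{\Phi}_{\theta}$ to choose a countable dense subset $\{\phi_{k} : k \in \N\}$; by condition \ref{condFinDimenTight}, for each $k$ there exists a compact set $K_{k} \subseteq C_{T}(\R)$ with $\mu_{\alpha}(\Pi_{\phi_{k}}^{-1}(K_{k})) > 1 - \epsilon / 2^{k+1}$ uniformly in $\alpha$. Setting $K^{*} = \bigcap_{k \in \N} \Pi_{\phi_{k}}^{-1}(K_{k})$ then yields a subset with $\mu_{\alpha}(K^{*}) \geq 1 - \epsilon/2$ for every $\alpha$, and it remains to check that its closure in $C_{T}((\widehat{\Phi}_{\theta})')$ is compact. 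By Theorem \ref{theoCharactCompactSubsets} applied to the barreled nuclear space $\widehat{\Phi}_{\theta}$, this reduces to verifying that $\Pi_{\phi}(\overline{K^{*}})$ is relatively compact in $C_{T}(\R)$ for every $\phi \in \widehat{\Phi}_{\theta}$, a property one would establish by approximating $\phi$ by the $\phi_{k}$ and using the Fourier transform estimates to control the resulting fluctuations uniformly.

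The main obstacle I expect is precisely this last passage from compactness along the countable dense family to compactness along all $\phi \in \widehat{\Phi}_{\theta}$: one needs to translate the uniform equicontinuity of the Fourier transforms into a uniform bound on the modulus of continuity $w_{x}(\delta,\phi)$ controlled by a Hilbertian seminorm, likely via a Chebyshev-type argument that turns smallness of $|\hat{\mu}_{\alpha,t}(\phi) - 1|$ into a probabilistic tail estimate on $|\inner{x(t)}{\phi}|$. Once uniform tightness on $C_{T}((\widehat{\Phi}_{\theta})')$ is established, the conclusion of uniform tightness on $C_{T}(\Phi')$ follows from the continuity of the inclusion $(\widehat{\Phi}_{\theta})' \hookrightarrow \Phi'$, which induces a continuous map $C_{T}((\widehat{\Phi}_{\theta})') \to C_{T}(\Phi')$ sending compact sets to compact sets.
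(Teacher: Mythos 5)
Your Step-1 construction (equicontinuity of the Fourier transforms $\Rightarrow$ regularization theorem $\Rightarrow$ a single $\theta$ on which every $\mu_{\alpha}$ becomes Radon on $C_{T}((\widehat{\Phi}_{\theta})')$) matches the paper. The gap is in the compactness argument, and it is the obstacle you yourself flag without resolving. The set $K^{*}=\bigcap_{k}\Pi_{\phi_{k}}^{-1}(K_{k})$ built from an arbitrary countable dense family $\{\phi_{k}\}$ controls the paths only along those $\phi_{k}$; to pass to all $\phi\in\widehat{\Phi}_{\theta}$ you need the family $\{x\in K^{*}\}$ to be \emph{equicontinuous} as maps $\phi\mapsto\inner{x(\cdot)}{\phi}$, i.e.\ a uniform bound $\sup_{x\in K^{*}}\sup_{t\in[0,T]}q'(x(t))\leq C$ for some continuous Hilbertian seminorm $q$. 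Density alone gives nothing here, and this bound does not follow from condition \ref{condFinDimenTight}. Moreover, the ``Chebyshev-type argument'' you propose only yields, for each \emph{fixed} $t$, a tail estimate on $\abs{\inner{x(t)}{\phi}}$ from $\abs{1-\hat{\mu}_{\alpha,t}(\phi)}$; a union bound over uncountably many $t$ is not available, so this does not produce the required $\sup_{t}$ estimate. The paper's Steps 2--3 do exactly this missing work: one first proves the uniform-in-$t$ estimate $\sup_{\alpha}\int\sup_{t\in[0,T]}\abs{1-e^{i\inner{x(t)}{\phi}}}\,d\mu_{\alpha}\leq \epsilon/12+2p(\phi)^{2}$ (this is Proposition 5.4 of \cite{FonsecaMora:Skorokhod}, a nontrivial step using ultrabornologicity of $\widehat{\Phi}_{\theta}$), and then a Gaussian-averaging (Minlos-type) argument over a Hilbert--Schmidt embedding $i_{p,q}$ converts it into $\inf_{\alpha}\mu_{\alpha}\bigl(\sup_{t}q'(x(t))\leq C\bigr)\geq 1-\epsilon/2$.

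A second, related omission: even granting the uniform dual-ball bound, your reduction via Theorem \ref{theoCharactCompactSubsets} requires $\Pi_{\phi}(\overline{K^{*}})$ relatively compact for \emph{every} $\phi$, and ``approximating $\phi$ by the $\phi_{k}$'' needs a summability structure that an arbitrary dense sequence lacks. The paper instead takes the $\phi_{j}$ to be a complete orthonormal system of $\Phi_{\varrho}$ with $i_{q,\varrho}$ Hilbert--Schmidt, so that $w_{x}(\delta,\varrho)^{2}\leq\sum_{j}w_{x}(\delta,\phi_{j}^{\varrho})^{2}$ is dominated by $4C^{2}\norm{i_{q,\varrho}}^{2}_{\mathcal{L}_{2}}$ and dominated convergence upgrades the one-dimensional moduli of continuity to the seminorm modulus $w_{x}(\delta,\varrho)\rightarrow 0$. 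You need both ingredients --- the uniform $q'$-ball from the Fourier-transform hypothesis and the Hilbert--Schmidt choice of the countable test family --- to close the argument; with them, your outline becomes essentially the paper's proof.
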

\begin{proof}
Large parts of the proof are analogue to those of Theorem 5.2 in \cite{FonsecaMora:Skorokhod}, there for the Skorokhod space. Below we summarize the proof in five steps and provide details only where the proof requires a change on the arguments. 

{\bf Step 1.} \emph{There exists a weaker countably Hilbertian topology $\theta$ on $\Phi$ such that $\forall \alpha \in A$, $\mu_{\alpha}$ is a Radon probability measure on $C_{T}((\widehat{\Phi}_{\theta})')$. }

In effect, for each $\alpha \in A$ there exists a cylindrical random variable $X_{\alpha} $ in $C_{T}(\Phi')$ such that $\mu_{\alpha}$ is the cylindrical distribution of $X_{\alpha}$. Without loss of generality we can assume all the random variables $X_{\alpha}$ are defined on the same probability space. 

Now, by Lemma 5.1, p.85, in \cite{Kallenberg}, we have
\begin{equation}\label{eqEquiContiCylRVAndFourierTransf}
\Prob (\abs{X_{\alpha}(\phi)(t)} \geq \epsilon) 
\leq \frac{\epsilon}{2} \int^{2/\epsilon}_{-2/\epsilon} (1-\Exp e^{i s X(\phi)(t)}) ds 
= \frac{\epsilon}{2} \int^{2/\epsilon}_{-2/\epsilon} (1-\hat{\mu}_{\alpha,t}(s\phi)) ds
\end{equation}
is valid for every  $\epsilon >0$, $\alpha \in A$, $t \in [0,T]$, and $\phi \in \Phi$. Then the equicontinuity at zero of the family $(\hat{\mu}_{\alpha, t}: t \in [0,T], \alpha \in A)$ shows that the family  $(\phi \mapsto X_{\alpha}(\phi)(t):  t \in [0,T], \alpha \in A)$ of cylindrical random variables in $\Phi'$ is equicontinuous at zero. We can therefore apply the regularization theorem (Theorem 3.2 in \cite{FonsecaMora:Existence}) to show that there exists a weaker countably Hilbertian topology $\theta$ on $\Phi$ such that $\forall \alpha \in A$ there exists a $(\widehat{\Phi}_{\theta})'$-valued continuous process $Y_{\alpha}=(Y_{\alpha,t}: t \geq 0)$ that is a version of $X_{\alpha}$ (unique up to indistinguishable versions). The equicontinuity in \eqref{eqEquiContiCylRVAndFourierTransf} guarantees that $\theta$ does not depend on $\alpha$. 

Then for each $\alpha \in A$ the mapping $\omega \mapsto ( t \mapsto Y_{\alpha,t}(\omega))$ from $\Omega$ into $C_{T}((\widehat{\Phi}_{\theta})')$ is $\mathcal{F}/\mathscr{C}(C_{T}((\widehat{\Phi}_{\theta})'))$-measurable. But since $\mathscr{C}(C_{T}((\widehat{\Phi}_{\theta})')) =  \mathcal{B}(C_{T}((\widehat{\Phi}_{\theta})'))$ (Proposition \ref{propProperContiSpaceWeakerCHT}), then each $Y_{\alpha}$ defines a random variable in $C_{T}((\widehat{\Phi}_{\theta})')$ which has a Radon probabilty distribution since $C_{T}((\widehat{\Phi}_{\theta})')$ is Suslin (Proposition \ref{propProperContiSpaceWeakerCHT}). But by our construction the  probabilty distribution of $Y_{\alpha}$ coincides with $\mu_{\alpha}$, then showing our assertion holds true. 

{\bf Step 2.} \emph{Given $\epsilon>0$, there exists a continuous Hilbertian seminorm $p$ on $\widehat{\Phi}_{\theta}$ (with $\theta$ as in Step 1) such that}
\begin{equation}\label{eqEquicontiFourierTransforms}
\sup_{\alpha \in A} \int_{C_{T}((\widehat{\Phi}_{\theta})')} \, \sup_{t \in [0,T]} \abs{1-e^{i\inner{x(t)}{\phi}}} d\mu_{\alpha} \leq \frac{\epsilon}{12}+2p(\phi)^{2}, \quad \forall \, \phi \in \Phi. 
\end{equation}

In effect \eqref{eqEquicontiFourierTransforms} can be proved from the fact that $\widehat{\Phi}_{\theta}$ is ultrabornological, the conclusion of Step 1, and by using Proposition 5.4 in \cite{FonsecaMora:Skorokhod}. Indeed, the result of Proposition 5.4 in \cite{FonsecaMora:Skorokhod} is formulated for the Skorokhod space on $[0,T]$, but the arguments used in its proof (in particular in the proof of Lemma 5.8 in \cite{FonsecaMora:Skorokhod}) are equally valid for the space of continuous mappings on $[0,T]$ under uniform convergence. We leave to the reader the task of checking this assertion.

{\bf Step 3.} \emph{Given $\epsilon>0$, there exists a continuous Hilbertian seminorm $q$ on $\Phi$, such that $p \leq q$ (with $p$ as in Step 2) and $i_{p,q}$ is Hilbert-Schmidt, and a $C>0$, such that}
\begin{equation}\label{eqUnifTightnessBallQNorm}
 \inf_{\alpha \in A} \mu_{\alpha} \left( x \in C_{T}(\Phi'_{q}): \sup_{t \in [0,T]} q'(x(t)) \leq C \right) \geq 1-\frac{\epsilon}{2}.
\end{equation}

In effect, given $\epsilon >0$ and by using the conclusion of Step 2, we can choose a continuous Hilbertian seminorm $q$ on $\Phi$ with $p \leq q$ and $i_{p,q}$ is Hilbert-Schmidt (note that since $p$ is continuous on $\widehat{\Phi}_{\theta}$ then $p$ is continuous on $\Phi$). Therefore by following similar arguments to those used in the proof of Theorem 5.2 in \cite{FonsecaMora:Skorokhod} we can show that 
for any $C>0$ and any complete orthonormal system $(\phi^{q}_{k})_{k \in \N} \subseteq \Phi$ in $\Phi_{q}$, we have
\begin{multline*}
  \mu_{\alpha} \left( x \in C_{T}((\widehat{\Phi}_{\theta})'): \sup_{t \in [0,T]}  \sum_{k=1}^{\infty} \abs{\inner{x(t)}{\phi_{k}^{q}}}^{2} > C^{2} \right) 
\\ 
\leq \frac{\sqrt{e}}{\sqrt{e}-1} \left(\frac{\epsilon}{12} + \frac{2}{C^{2}} \norm{ i_{p,q} }^{2}_{\mathcal{L}_{2}(\Phi_{q}, \Phi_{p}) }  \right).
\end{multline*}
Then choosing $C$ large enough so that $\displaystyle{\frac{2}{C^{2}} \norm{ i_{p,q} }^{2}_{\mathcal{L}_{2}(\Phi_{q}, \Phi_{p}) } < \frac{\epsilon}{12}}$ and considering the probability of the complement we get \eqref{eqUnifTightnessBallQNorm}.  

{\bf Step 4.} \emph{Given $\epsilon >0$, there exists a continuous Hilbertian seminorm $\varrho$ on $\Phi$, such that $q \leq \varrho$ (with $q$ as in Step 3) and $i_{q,\varrho}$ is Hilbert-Schmidt, and a compact subset $\Gamma$ in $C_{T}(\Phi'_{\varrho})$ such that $\inf_{\alpha \in A} \mu_{\alpha}(\Gamma) \geq 1-\epsilon$. }

In effect, let $\epsilon>0$ and let $q$ and $C$ as in Step 3 and choose a continuous Hilbertian seminorm $\varrho$ on $\Phi$ such that $q \leq \varrho$   and $i_{q,\varrho}$ is Hilbert-Schmidt. 

Let $(\phi^{\varrho}_{k})_{k \in \N} \subseteq \Phi$ be a complete orthonormal system in $\Phi_{\varrho}$. For each $j \in \N$, from our assumption of tightness of the family $(\mu_{\alpha} \circ \Pi^{-1}_{\phi_{j}^{\varrho}}: \alpha \in A)$ and Theorem 7.3 in \cite{Billingsley} there exists a compact subset $B_{j}$ of $C_{T}(\R)$ such that 
\begin{equation}\label{eqMeasureSpaceProjectionsTightness}
\inf_{\alpha \in A} \mu_{\alpha} \circ \Pi^{-1}_{\phi_{j}^{\varrho}}(B_{j})> 1 -\frac{\epsilon}{2^{j+1}},
\end{equation}
and
\begin{equation}\label{eqModulusContinuSpaceProjections}
\lim_{\delta \rightarrow 0} \sup_{x \in \Pi^{-1}_{\phi_{j}^{\varrho}}(B_{j})} w_{x}(\delta, \phi_{j}^{\varrho})=0. 
\end{equation}
Let 
$$\Gamma=\left( \bigcap_{j =1}^{\infty} \Pi_{\phi_{j}^{\varrho}}^{-1} (B_{j}) \right) \cap \left\{ x \in C_{T}(\Phi'_{q}): \sup_{t \in [0,T]} q'(x(t)) \leq C \right\}.$$
Observe that by \eqref{eqUnifTightnessBallQNorm} and \eqref{eqMeasureSpaceProjectionsTightness} we have 
$ \inf_{\alpha \in A} \mu_{\alpha}(\Gamma) \geq 1-\epsilon$. 

We must show that $\Gamma$ is compact in $C_{T}(\Phi'_{\varrho})$. To do this, observe that since the image of $\{ f \in \Phi'_{q}: q'(f) \leq C \}$ under $i'_{q,\varrho}$ is relatively compact  in $\Phi'_{\varrho}$, then by Theorem 2.4.3 in \cite{KallianpurXiong} it suffices to show that $\sup_{x \in \Gamma} w_{x}(\delta,\varrho) \rightarrow 0$ as $\delta \rightarrow 0$.  

In effect, observe that by our definition of $\Gamma$ and since $i_{q, \varrho}$ is Hilbert-Schmidt, for any $\delta >0$ we have
\begin{eqnarray*}
\sum_{j =1}^{\infty}  \sup_{x \in \Gamma}
w_{x}(\delta, \phi_{j}^{\varrho})^{2} 
& \leq  & 4 \sum_{j =1}^{\infty} 
\sup_{x \in \Gamma} \sup_{t \in [0,T]}  \abs{\inner{x(t)}{\phi_{j}^{\varrho}}}^{2}  \\ 
& \leq & 4  \sum_{j =1}^{\infty} \sup_{x \in \Gamma} \sup_{t \in [0,T]}   q'(x(t))^{2}  q(\phi_{j}^{\varrho})^{2}  \\
& \leq & 4 C^{2} \norm{i_{q,\varrho}}^{2}_{\mathcal{L}_{2}(\Phi_{\varrho},\Phi_{q})} < \infty. 
\end{eqnarray*}

Therefore, by \eqref{eqModulusContinuSpaceProjections} and the dominated convergence theorem it follows that
\begin{eqnarray*}
\lim_{\delta \rightarrow 0} \sup_{x \in \Gamma} w_{x}(\delta, \varrho)^{2} 
& = & \lim_{\delta \rightarrow 0} \sup_{x \in \Gamma} \sup_{\abs{s-t}< \delta}   \sum_{j =1}^{\infty} \abs{\inner{x(t)-x(s)}{\phi_{j}^{\varrho}}}^{2}  \\
& \leq & \lim_{\delta \rightarrow 0} \sum_{j =1}^{\infty}  \sup_{x \in \Gamma}
w_{x}(\delta, \phi_{j}^{\varrho})^{2} \\
& = & \sum_{j =1}^{\infty} \lim_{\delta \rightarrow 0} \sup_{x \in \Gamma}
w_{x}(\delta, \phi_{j}^{\varrho})^{2} =0. 
\end{eqnarray*}

{\bf Step 5.} \emph{There exists a weaker countably Hilbertian topology $\vartheta$ on $\Phi$ such that $(\mu_{\alpha}: \alpha \in A)$ is uniformly tight on $C_{T}((\widehat{\Phi}_{\vartheta})')$. }

In effect, choose any  decreasing sequence of positive real numbers $(\epsilon_{n}: n \in \N)$ converging to zero. For each $n \in \N$ by Step 4 there exists a continuous Hilbertian seminorm $\varrho_{n}$ on $\Phi$ and a compact subset $\Gamma_{n}$ in $C_{T}(\Phi'_{\varrho_{n}})$ such that 
$ \inf_{\alpha \in A} \mu_{\alpha} (\Gamma_{n}) \geq 1-\epsilon_{n}.$

Denote by $\vartheta$ the weaker countably Hilbertian topology on $\Phi$ generated by the seminorms $(\varrho_{n}: n \in \N)$. Then each inclusion mapping $i'_{\varrho_{n}}: \Phi'_{\varrho_{n}} \rightarrow (\widehat{\Phi}_{\vartheta})'$ is linear continuous, thus each $\Gamma_{n}$ is compact in $C_{T}((\widehat{\Phi}_{\vartheta})')$. Hence the family
$(\mu_{\alpha}: \alpha \in A)$ is uniformly tight on $C_{T}((\widehat{\Phi}_{\vartheta})')$. 

Finally, the fact that $(\mu_{\alpha}: \alpha \in A)$ is uniformly tight on $C_{T}(\Phi')$ follows from Step 5 and the fact that the canonical inclusion from $(\widehat{\Phi}_{\vartheta})'$ into $\Phi'$ is linear continuous.  
\end{proof}

The second main step in the proof of  Theorem \ref{theoUniforTightSpaceContiTimeInfty} is provided in the next lemma. 

\begin{lemma}\label{lemmaExistenceCompactTightness}
Given $\epsilon >0$, there exists a weaker countably Hilbertian topology $\theta_{\epsilon}$ on $\Phi$ and a compact subset $\mathcal{K}_{\epsilon}$ of $C_{\infty}((\widehat{\Phi}_{\theta_{\epsilon}})')$ such that $\sup_{\alpha \in A} \mu_{\alpha}(\mathcal{K}_{\epsilon}^{c})< \epsilon$.
\end{lemma}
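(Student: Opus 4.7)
The strategy is to apply Lemma \ref{lemmaUniforTightSpaceContiOnT} on each compact time window $[0,n]$ with a summable tolerance $\epsilon/2^{n+1}$, and then glue the resulting countable family of topologies and compact sets together using the projective--limit description of $C_{\infty}$ and Tychonoff's theorem.

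First, for each $n \in \N$ let $r_{n} : C_{\infty}(\Phi') \to C_{n}(\Phi')$ denote the restriction map. Hypotheses \ref{condEquiFourTrans} and \ref{condFinDimenTight} of Theorem \ref{theoUniforTightSpaceContiTimeInfty} pass immediately to the family of push-forwards $(\mu_{\alpha} \circ r_{n}^{-1} : \alpha \in A)$ on $C_{n}(\Phi')$, since Fourier transforms at times $t \in [0,n]$ and one--dimensional projections onto $C_{n}(\R)$ are preserved by restriction. So Lemma \ref{lemmaUniforTightSpaceContiOnT} applied on $[0,n]$ produces a weaker countably Hilbertian topology $\theta_{n}$ on $\Phi$ and a compact subset $K_{n} \subseteq C_{n}((\widehat{\Phi}_{\theta_{n}})')$ with
$$ \sup_{\alpha \in A} \, \mu_{\alpha} \circ r_{n}^{-1}(K_{n}^{c}) \,<\, \frac{\epsilon}{2^{n+1}}. $$

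Next, let $\theta_{\epsilon}$ be the locally convex topology on $\Phi$ generated by the countable union of the (countable) families of continuous Hilbertian seminorms defining the $\theta_{n}$; then $\theta_{\epsilon}$ is itself a weaker countably Hilbertian topology on $\Phi$. Since $\theta_{n} \leq \theta_{\epsilon}$ for every $n$, the canonical inclusion $(\widehat{\Phi}_{\theta_{n}})' \hookrightarrow (\widehat{\Phi}_{\theta_{\epsilon}})'$ is continuous and linear and induces a continuous map $C_{n}((\widehat{\Phi}_{\theta_{n}})') \to C_{n}((\widehat{\Phi}_{\theta_{\epsilon}})')$, under which each $K_{n}$ remains compact. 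I then set
$$ \mathcal{K}_{\epsilon} \,=\, \bigcap_{n \in \N} r_{n}^{-1}(K_{n}) \,\subseteq\, C_{\infty}((\widehat{\Phi}_{\theta_{\epsilon}})'), $$
where $r_{n}$ is now the restriction from $C_{\infty}((\widehat{\Phi}_{\theta_{\epsilon}})')$ onto $C_{n}((\widehat{\Phi}_{\theta_{\epsilon}})')$.

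The key step is compactness of $\mathcal{K}_{\epsilon}$. As noted after Proposition \ref{propProperContiSpaceWeakerCHT}, the topology on $C_{\infty}((\widehat{\Phi}_{\theta_{\epsilon}})')$ is the projective limit of the topologies on the $C_{n}((\widehat{\Phi}_{\theta_{\epsilon}})')$ along the restrictions, so the map
$$ \iota \colon C_{\infty}((\widehat{\Phi}_{\theta_{\epsilon}})') \hookrightarrow \prod_{n \in \N} C_{n}((\widehat{\Phi}_{\theta_{\epsilon}})'), \qquad \iota(x) = (r_{n}(x))_{n \in \N}, $$
is a topological embedding; moreover the image is closed in the product because it is cut out by the countable family of closed consistency conditions $r_{m}(x_{n}) = x_{m}$ for $m \leq n$ (the spaces are Hausdorff by Proposition \ref{propProperContiSpaceWeakerCHT}). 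Under $\iota$, $\mathcal{K}_{\epsilon}$ corresponds to $\iota(C_{\infty}((\widehat{\Phi}_{\theta_{\epsilon}})')) \cap \prod_{n} K_{n}$, a closed subset of the compact product $\prod_{n} K_{n}$ (by Tychonoff), hence compact.

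Finally, uniformly in $\alpha \in A$, $\sigma$-subadditivity gives
$$ \mu_{\alpha}(\mathcal{K}_{\epsilon}^{c}) \,\leq\, \sum_{n=1}^{\infty} \mu_{\alpha}(r_{n}^{-1}(K_{n}^{c})) \,<\, \sum_{n=1}^{\infty} \frac{\epsilon}{2^{n+1}} \,=\, \frac{\epsilon}{2} \,<\, \epsilon. $$
The main technical point is the compactness argument for $\mathcal{K}_{\epsilon}$: one has to verify carefully that $\iota$ is a topological embedding with closed image so that Tychonoff applies. Everything else is a direct assembly of the one--interval result of Lemma \ref{lemmaUniforTightSpaceContiOnT} with a geometric splitting of the error $\epsilon$.
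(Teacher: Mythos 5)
Your proof is correct, and its first half (restricting to the windows $[0,n]$ via Lemma \ref{lemmaUniforTightSpaceContiOnT}, splitting the error geometrically, merging the countably many Hilbertian topologies into one $\theta_{\epsilon}$, and taking $\mathcal{K}_{\epsilon}=\bigcap_{n} r_{n}^{-1}(K_{n})$) is exactly the paper's construction. Where you genuinely diverge is the compactness of $\mathcal{K}_{\epsilon}$. The paper first upgrades each $K_{n}$, via Theorem \ref{theoCharactCompactSubsets} and Theorem 2.4.3 of \cite{KallianpurXiong}, to a compact set sitting inside a single Hilbert space $C_{n}(\Phi'_{q_{n}})$ together with a uniform modulus-of-continuity bound, and then runs Arzel\`a--Ascoli in $C_{\infty}((\widehat{\Phi}_{\theta})')$, which requires the extra bookkeeping of dominating each strong-topology seminorm $\eta_{B}$ by a multiple of $q'_{n}$. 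You instead use the projective-limit description of $C_{\infty}$ (stated at the end of Section \ref{sectSpaceContMappins}) to embed it as a closed subspace of $\prod_{n} C_{n}((\widehat{\Phi}_{\theta_{\epsilon}})')$ and invoke Tychonoff; the only inputs are Hausdorffness of $(\widehat{\Phi}_{\theta_{\epsilon}})'$ and continuity of the inclusions $(\widehat{\Phi}_{\theta_{n}})'\hookrightarrow(\widehat{\Phi}_{\theta_{\epsilon}})'$ (so that the $K_{n}$ stay compact), both of which hold and are used elsewhere in the paper. Your route is softer and shorter, avoiding the equicontinuity computations entirely; the paper's route yields more explicit information about the compact set (containment in a $q'$-ball of $C_{T}(\Phi'_{q})$ and a quantitative modulus of continuity), which is the form reused later, e.g.\ in the proof of Theorem \ref{theoWeakConvSoluSEE}. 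Both arguments are valid, and your measure estimate by $\sigma$-subadditivity matches the paper's.
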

\begin{proof}
First, for each $n \in \N$ by Lemma \ref{lemmaUniforTightSpaceContiOnT} the family $(\mu_{\alpha} \circ r_{n}^{-1}: \alpha \in A)$ is uniformly tight on $C_{n}(\Phi')$. So by Theorem \ref{theoCharactCompactSubsets} given $\epsilon>0$ there exists a continuous Hilbertian seminorm $q_{n}$ on $\Phi$ and $\mathcal{K}_{n} \subseteq C_{n}(\Phi'_{q_{n}})$ compact such that $\sup_{\alpha \in A} \mu_{\alpha} \circ r_{n}^{-1}(\mathcal{K}_{n}^{-1})< \epsilon / 2^{n}$. 

By Theorem 2.4.3 in \cite{KallianpurXiong} it follows that:
\begin{enumerate}[label=(\alph*)]
    \item there exists a compact subset $K_{n}$ of $\Phi'_{q_{n}}$ such that $x(t) \in K_{n}$ for every $ x \in \mathcal{K}_{n}$ and $t \in [0,n]$,
    \item $\lim_{\delta \rightarrow 0} \sup_{x \in \mathcal{K}_{n}} w_{x}(\delta,q)=0$.
\end{enumerate}   

Let $\theta$ be the weaker countably Hilbertian topology on $\Phi$ generated by the family $(q_{n}: n \in \N)$ and let $\mathcal{K}= \bigcap_{n=1}^{\infty} r_{n}^{-1}(\mathcal{K}_{n}) \subseteq C_{\infty}((\widehat{\Phi}_{\theta})')$. Since each $r_{n}$ is continuous, it is clear that $\mathcal{K}$ is closed in $C_{\infty}((\widehat{\Phi}_{\theta})')$, we must show that it is compact. 

Let $K = \bigcap_{n =1}^{\infty} K_{n}$. From the properties of the sets $K_{n}$, we can conclude that the set $K$ is compact in $(\widehat{\Phi}_{\theta})'$ and $x(t) \in K$ $\forall t \geq 0$, $x \in \mathcal{K}$.  Then by the Arzel\`{a}-Ascoli Theorem (see \cite{Kelley}, Theorem 7.18, p.234) in order to show that $\mathcal{K}$ is compact in $C_{\infty}((\widehat{\Phi}_{\theta})')$ it is enough to show that 
$$ \lim_{\delta \rightarrow 0} \sup_{r_{n} (x) \in r_{n}(\mathcal{K})} w'_{x}(n,\delta,p)=0, \quad \forall n \in \N, \, p \in \Pi, $$
where $\Pi$ is a generating family of seminorms on $(\widehat{\Phi}_{\theta})'$ and for $n \in \N$, $\delta >0$ and $p \in \Pi$,
$$ w'_{x}(n,\delta,p) = \sup \{ p(x(t)-x(s)): 0 \leq s \leq t \leq n, \, \abs{s-t}< \delta  \}. $$
Since $(\widehat{\Phi}_{\theta})'$  is equipped with its strong topology, a generating family of seminorms is the collection $\eta_{B}(f)=\sup_{\phi \in B} \abs{ \inner{f}{\phi}}$, where $B$ ranges on the bounded subsets in  $\widehat{\Phi}_{\theta}$. 

Choose any $B \subseteq \widehat{\Phi}_{\theta}$ bounded and $n \in \N$. Since the family $(q_{k}: k \in \N)$ generates the topology $\theta$, then as $B$ is bounded it follows by definition that there exists $C_{n}>0$ such that $B \subseteq C_{n} B_{q_{n}}(1)$. Hence for $f \in \Phi'_{q_{n}}$, 
$$ \eta_{B}(f)=\sup_{\phi \in B} \abs{\inner{f}{\phi}} 
\leq \sup_{\phi \in  C_{n} B_{q_{n}}(1)} \abs{\inner{f}{\phi}} 
= C_{n} \sup_{\phi \in  B_{q_{n}}(1)} \abs{\inner{f}{\phi}}
= C_{n} q'_{n}(f). $$

Therefore, 
$$ \lim_{\delta \rightarrow 0} \sup_{r_{n} (x) \in r_{n}(\mathcal{K})} w'_{x}(n,\delta, \eta_{B}) \leq C_{n} \lim_{\delta \rightarrow 0} \sup_{x \in \mathcal{K}_{n}} w_{x}(\delta,q_{n}) = 0. $$
So we have proved that $\mathcal{K}$ is compact.
\end{proof}

\begin{proof}[Proof of  Theorem \ref{theoUniforTightSpaceContiTimeInfty}]
Let $(\epsilon_{m}: m \in \N)$ be a decreasing sequence of positive numbers converging to $0$. Then $\forall m \in \N$, $\exists \theta_{m}$ and $\mathcal{K}_{m}$ as in Lemma \ref{lemmaExistenceCompactTightness}. Let $\theta$ the weaker countably Hilbertian topology on $\Phi$ generated by the Hilbertian seminorms generating the topologies $\theta_{m}$, $\forall m \in \N$. Then each $\mathcal{K}_{m}$ is compact in $C_{\infty}((\widehat{\Phi}_{\theta})')$ and therefore the family $(\mu_{\alpha}: \alpha \in A)$ is uniformly tight on $C_{\infty}((\widehat{\Phi}_{\theta})')$, hence on $C_{\infty}(\Phi')$. 
\end{proof}

\begin{remark}\label{remaNecessaryCondTighness}
If $\Phi$ is a barrelled nuclear space and the family $(\mu_{\alpha}: \alpha \in A)$  is uniformly tight on $C_{\infty}(\Phi')$, then conditions \ref{condEquiFourTrans} and \ref{condFinDimenTight} in Theorem \ref{theoUniforTightSpaceContiTimeInfty} are satisfied. So this see, observe that by Proposition 1.6.vi) in \cite{Jakubowski:1986} the family of probability measures $(\mu_{\alpha}\circ \Pi^{-1}_{t}: \alpha \in A, t \geq 0)$ is uniformly tight on $\Phi'$. But because $\Phi$ is barrelled and nuclear, it follows that for each $T>0$ the family of Fourier transforms $(\hat{\mu}_{\alpha,t}: t \in [0,T], \alpha \in A)$ is equicontinuous at zero (see \cite{DaleckyFomin}, Theorem III.2.7, p.104). Finally for each $\phi \in \Phi$ the continuity of the space projection map $\Pi_{\phi}$ and that  $(\mu_{\alpha}: \alpha \in A)$ is uniformly tight on $C_{\infty}(\Phi')$ implies that  $(\mu_{\alpha} \circ \Pi_{\phi}^{-1}: \alpha \in A)$ is uniformly tight on $C_{\infty}(\R)$.    
\end{remark}

\begin{remark}\label{remaUltraboTheoTight} As mentioned in Section \ref{sectMainTheorems}, if $\Phi$ is an ultrabornological nuclear space, then condition \ref{condFinDimenTight} implies condition \ref{condEquiFourTrans}  in Theorem \ref{theoUniforTightSpaceContiTimeInfty}. 
This conclusion follows from Proposition 5.4 in \cite{FonsecaMora:Skorokhod}, which is valid for the space $C_{T}(\Phi')$, this as  pointed out in Step 2 in the proof of Lemma \ref{lemmaUniforTightSpaceContiOnT}. 
\end{remark}

\begin{proof}[Proof of Theorem \ref{theoWeakConvergCylinProcesInCTInfy}] First, by (1) for each $n \in \N$ the cylindrical process   $X^{n}=(X^{n}_{t}: t \geq 0)$ defines a cylindrical random variable in $C_{\infty}(\Phi')$. Let $\mu_{n}$ be the cylindrical distribution of $X^{n}$. By (2) for each $T>0$ the family of Fourier transforms $(\hat{\mu}_{n,t}: t \in [0,T], n \in \N)$ is equicontinuous at the origin.  As shown in Step 1 in the proof of Lemma \ref{lemmaUniforTightSpaceContiOnT}, each $X^{n}$ extends to a genuine random variable $Y^{n}$ in $C_{\infty}(\Phi')$ and $\mu_{n}$ extends to a probability measure on $C_{\infty}(\Phi')$ that corresponds to the probability distribution of $Y^{n}$. By (3) and Theorem \ref{theoUniforTightSpaceContiTimeInfty} there exists a weaker countably Hilbertian topology $\theta$ on $\Phi$ such that  $(\mu_{n}: n \in \N)$ is uniformly tight on $C_{\infty}((\widehat{\Phi}_{\theta})')$. Then one can show from (4) that every subsequence of $(\mu_{n}: n \in \N)$ contains a further subsequence that converges weakly to the same limit $\mu$ in $\goth{M}^{1}(C_{\infty}((\widehat{\Phi}_{\theta})'))$. Hence, Theorem 2 in \cite{Billingsley} shows that  
$\mu_{n} \Rightarrow \mu$ in $\goth{M}^{1}(C_{\infty}((\widehat{\Phi}_{\theta})'))$.
Finally because the inclusion $j_{\theta}$ from $(\widehat{\Phi}_{\theta})'$ into $\Phi'$ is linear and continuous, then for every continuous bounded real-valued function $f$ defined on  $\Phi'$ we have $f \circ j_{\theta}$ is a continuous bounded real-valued function defined on $(\widehat{\Phi}_{\theta})'$. Therefore, the fact that $\mu_{n} \Rightarrow \mu$ in $\goth{M}^{1}(C_{\infty}((\widehat{\Phi}_{\theta})'))$ implies that $\mu_{n} \Rightarrow \mu$ in $\goth{M}^{1}(C_{\infty}(\Phi'))$.
 Hence each $Y^{n}$ is a $C_{\infty}((\widehat{\Phi}_{\theta})')$-valued random variable and $Y$ is a $C_{\infty}((\widehat{\Phi}_{\theta})')$-valued random variable whose probability distribution is $\mu$. Therefore (i)-(iii) are clearly satisfied. 
\end{proof}

\begin{proof}[Proof of Theorem \ref{theoWeakConvUltrabornological}]
If $\Phi$ is an ultrabornological nuclear space, then condition \ref{condTighTheoWeakConv} implies condition \ref{condEquiTheoWeakConv}  in Theorem \ref{theoWeakConvergCylinProcesInCTInfy}. In effect, as in Remark \ref{remaUltraboTheoTight} we have condition \ref{condEquiFourTrans}  in Theorem \ref{theoUniforTightSpaceContiTimeInfty} holds true. Then condition \ref{condEquiTheoWeakConv}  in Theorem \ref{theoWeakConvergCylinProcesInCTInfy} follows by \eqref{eqEquiContiCylRVAndFourierTransf}. 

Now, if we apply Theorem \ref{theoWeakConvergCylinProcesInCTInfy} to the sequence $(X^{n}:n \in \N)$ of $\Phi'$-valued regular continuous processes we have by \ref{concluRegularVersTheoWeakConv} in Theorem \ref{theoWeakConvergCylinProcesInCTInfy} that each $X^{n}$ defines a $C_{\infty}(\Phi')$-valued random variable, by \ref{concluTightTheoWeakConv} in Theorem \ref{theoWeakConvergCylinProcesInCTInfy} that the sequence $(X^{n}: n \in \N)$ is uniformly tight on $C_{\infty}(\Phi')$, and by \ref{concluConvergenTheoWeakConv} in Theorem \ref{theoWeakConvergCylinProcesInCTInfy} that there exists a 
$C_{\infty}(\Phi')$-valued random variable $X$ (in particular, $X$ is a $\Phi'$-valued regular continuous process) such that $X^{n} \Rightarrow X$ in $C_{\infty}(\Phi')$.  
\end{proof}

\section{Application: A central limit theorem for local martingales}\label{sectCentralLimitTheorem}

Now we illustrate the usefulness of Theorem \ref{theoWeakConvUltrabornological} by applying it to the study of weak convergence of a sequence of continuous local martingales taking values in the dual of an ultrabornological nuclear space $\Phi$. 

Recall that a $\Phi'$-valued process $M=(M_{t}: t \geq 0)$ is said to be a \emph{(local) martingale}  if for each $\phi \in \Phi$, we have $M(\phi) \defeq (\inner{M_{t}}{\phi}: t \geq 0)$ is a (local) martingale. For each $\phi \in \Phi$, we denote by $\langle M(\phi) \rangle$ the quadratic variation of $M(\phi)$. 

\begin{theorem}\label{theoWeakConvContLocalMartin}
Let $\Phi$ be an ultrabornological nuclear space. Assume for each $n=1,2,\dots$, $M^{n}=(M^{n}_{t}: t \geq 0)$ is a $\Phi'$-valued regular local martingale with continuous paths  satisfying $M^{n}_{0}=0$. Moreover, assume the following:
\begin{enumerate}
\item \label{assuTighQuaVaria} For every $\phi \in \Phi$,  $(\langle M^{n}(\phi) \rangle: n \in \N)$ is uniformly tight in $C_{\infty}(\R)$. 
\item $\forall$ $m \in \N$, $\phi_{1}, \dots, \phi_{m} \in \Phi$, $t_{1}, \dots, t_{m} \geq 0$, the distribution of the $m$-dimensional random vector $(M_{t_{1}}^{n}(\phi_{1}), \cdots, M_{t_{m}}^{n}(\phi_{m}))$ converges in distribution to some probability measure on $\R^{m}$. 
\end{enumerate}
Then, $(M^{n}:n \in \N)$ is  uniformly tight on $C_{\infty}(\Phi')$ and there exists a $\Phi'$-valued regular local martingale $M=(M_{t}: t \geq 0)$ with continuous paths and $M_{0}=0$, such that $M^{n} \Rightarrow M$ in $C_{\infty}(\Phi')$. 
\end{theorem}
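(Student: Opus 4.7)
The plan is to reduce the theorem to a direct application of Theorem \ref{theoWeakConvUltrabornological}. Condition (2) of the current theorem is precisely the convergence of finite-dimensional distributions required by Theorem \ref{theoWeakConvUltrabornological}, so it remains to verify the scalar-tightness hypothesis: for each $\phi \in \Phi$, the sequence of distributions of $\inner{M^n}{\phi}$ is uniformly tight in $C_\infty(\R)$. Once Theorem \ref{theoWeakConvUltrabornological} produces a $\Phi'$-valued regular continuous limit process $M$, the remaining task will be to check that $M$ is a local martingale with $M_0 = 0$.

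For the scalar tightness, fix $\phi \in \Phi$ and observe that $M^n(\phi)$ is a real-valued continuous local martingale with $M^n_0(\phi)=0$ whose quadratic variation $\langle M^n(\phi) \rangle$ is uniformly tight in $C_\infty(\R)$ by condition \ref{assuTighQuaVaria}. I would then invoke Rebolledo's C-tightness criterion for local martingales: since the paths of $M^n(\phi)$ are continuous, the jump condition in Rebolledo's theorem is trivially satisfied, so uniform tightness of the quadratic variations in $C_\infty(\R)$ alone is sufficient for uniform tightness of the martingales themselves. Equivalently, one may apply the Dambis--Dubins--Schwarz representation $M^n_t(\phi) = B^n_{\langle M^n(\phi) \rangle_t}$ for some Brownian motion $B^n$ and deduce tightness of the composition from tightness of the continuous time change together with that of Brownian motion. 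Either route gives that $(M^n(\phi): n \in \N)$ is uniformly tight in $C_\infty(\R)$.

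With both hypotheses of Theorem \ref{theoWeakConvUltrabornological} now verified, that theorem furnishes a $\Phi'$-valued regular process $M$ with continuous paths satisfying $M^n \Rightarrow M$ in $C_\infty(\Phi')$, together with the asserted uniform tightness of $(M^n : n \in \N)$. The identity $M_0 = 0$ follows immediately from $M^n_0 = 0$ and the convergence of the one-dimensional distributions at $t=0$. The delicate remaining step, which I expect to be the main obstacle, is to show that $M(\phi)$ is a continuous local martingale for every $\phi \in \Phi$. The continuous mapping theorem applied to the space projection $\Pi_\phi$ yields $M^n(\phi) \Rightarrow M(\phi)$ in $C_\infty(\R)$; combined with the tightness of $(\langle M^n(\phi) \rangle)$ the joint sequence $(M^n(\phi), \langle M^n(\phi) \rangle)$ is tight in $C_\infty(\R) \times C_\infty(\R)$, and along a subsequence we obtain joint convergence $(M^{n_k}(\phi), \langle M^{n_k}(\phi) \rangle) \Rightarrow (M(\phi), A_\phi)$ for some continuous non-decreasing process $A_\phi$. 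A standard limit theorem for continuous local martingales with convergent quadratic variations (of the Rebolledo--Jacod--Shiryaev type) then shows that the limit $M(\phi)$ is itself a continuous local martingale (with $\langle M(\phi) \rangle = A_\phi$). Since this argument applies to every $\phi \in \Phi$ and the definition of a $\Phi'$-valued local martingale is precisely the pointwise-in-$\phi$ one, $M$ is a $\Phi'$-valued continuous local martingale, completing the proof.
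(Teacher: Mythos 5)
Your proposal is correct and follows essentially the same route as the paper: scalar tightness of $M^{n}(\phi)$ is deduced from tightness of the quadratic variations via Rebolledo's criterion (the paper cites Lemma III.11 of Rebolledo and Theorem A of Nakao), Theorem \ref{theoWeakConvUltrabornological} is then applied to produce the limit $M$, and the local-martingale property of $M(\phi)$ is obtained from the stability of continuous local martingales started at zero under weak convergence in $C_{\infty}(\R)$ (the paper invokes Theorem B of Nakao, which packages the joint-convergence argument you sketch).
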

\begin{proof}
Let $\phi \in \Phi$. By \ref{assuTighQuaVaria} and Lemma III.11 in \cite{Rebolledo:1980} (see also Theorem A in \cite{Nakao:1986}) the sequence $(M^{n}(\phi): n \in \N)$ is uniformly tight in $C_{\infty}(\R)$. Then, by  Theorem \ref{theoWeakConvUltrabornological} there exists a $\Phi'$-valued regular process $M=(M_{t}: t \geq 0)$ with continuous paths such that $M^{n} \Rightarrow M$ in $C_{\infty}(\Phi')$. Moreover, because the property of being a real-valued  local martingale with continuous paths starting at zero is preserved under weak convergence in $C_{\infty}(\R)$ (see Theorem B in \cite{Nakao:1986}), for each $\phi \in \Phi$ we have $M(\phi)$ is a local martigale with $M_{0}(\phi)=0$. That is, $M$ is a $\Phi'$-valued local martingale with $M_{0}=0$. 
\end{proof}

A central limit theorem for local martingales taking values in the dual of a nuclear Fr\'echet space was proved by Fierro in \cite{Fierro:1988}. We will apply Theorem \ref{theoWeakConvContLocalMartin} to prove an extension of the central limit theorem to the case of local martingales taking values in the dual of an ultrabornological nuclear space.  Recall that a $\Phi'$-valued process $M=(M_{t}: t \geq 0)$ is called \emph{Gaussian} if for any $m \in \N$ and any $\phi_{1}, \dots, \phi_{m} \in \Phi$,  $\left((M_{t}(\phi_{1}), \dots,M_{t}(\phi_{n})):  t \geq 0 \right)$ is a Gaussian process in $\R^{m}$. 

\begin{theorem}[Central limit theorem]\label{theoCentralLimitTheorem}
Let $\Phi$ be an ultrabornological nuclear space. Assume for each $n=1,2,\dots$, $M^{n}=(M^{n}_{t}: t \geq 0)$ is a $\Phi'$-valued regular local martingale with continuous paths  satisfying $M^{n}_{0}=0$. Assume the existence of a mapping $A: [0,\infty) \times \Phi \rightarrow \R$ such that:
\begin{enumerate}
\item $\forall t \geq 0$, $A(t,\cdot)$ is a continuous positive semidefinite quadratic form on $\Phi$, 
\item $\forall \phi \in \Phi$, $A(\cdot, \phi)$ is an increasing continuous function with $A(0,\phi)=0$. 
\item $\forall t \geq 0$ and $\phi \in \Phi$,  $\langle M^{n}(\phi) \rangle_{t}$ converges in probability towards $A(t,\phi)$ as $n \rightarrow \infty$. 
\end{enumerate}
Then, there exists a $\Phi'$-valued Gaussian mean-zero continuous martingale $M=(M_{t}: t \geq 0)$ with covariance $\Exp \left[ \abs{M_{t}(\phi)}^{2} \right] = A(t,\phi)$ $\forall t \geq 0$ and $\phi \in \Phi$, such that $M^{n} \Rightarrow M$ in $C_{\infty}(\Phi')$. 
\end{theorem}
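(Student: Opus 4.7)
The plan is to apply Theorem \ref{theoWeakConvContLocalMartin} to the sequence $(M^{n}: n \in \N)$ and then identify the limit using the convergence of quadratic variations. This requires verifying the two hypotheses of that theorem: uniform tightness in $C_{\infty}(\R)$ of $(\langle M^{n}(\phi)\rangle: n \in \N)$ for every $\phi \in \Phi$, and convergence in distribution of every finite-dimensional projection $(M^{n}_{t_{1}}(\phi_{1}), \dots, M^{n}_{t_{m}}(\phi_{m}))$.

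For the first, fix $\phi \in \Phi$. Each $\langle M^{n}(\phi)\rangle$ is a continuous increasing process starting at zero, and by hypothesis $\langle M^{n}(\phi)\rangle_{t}$ converges in probability to the deterministic continuous increasing function $A(\cdot,\phi)$ at every $t \geq 0$. A standard Dini-type argument (pointwise convergence in probability of monotone processes toward a continuous monotone deterministic limit upgrades to uniform convergence on compacts in probability) then shows that for every $T>0$,
$$ \sup_{t \in [0,T]} \abs{\langle M^{n}(\phi)\rangle_{t} - A(t,\phi)} \longrightarrow 0 \quad \text{in probability as } n \to \infty,$$
which yields uniform tightness in $C_{\infty}(\R)$, establishing hypothesis (1) of Theorem \ref{theoWeakConvContLocalMartin}. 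For the second hypothesis, fix $m \in \N$ and $\phi_{1}, \dots, \phi_{m} \in \Phi$, and consider the $\R^{m}$-valued continuous local martingale $\tilde M^{n} \defeq (M^{n}(\phi_{1}), \dots, M^{n}(\phi_{m}))$ starting at zero. Because $A(t, \cdot)$ is a positive semidefinite quadratic form, its polarization $B_{ij}(t) \defeq \frac{1}{4}(A(t,\phi_{i}+\phi_{j})-A(t,\phi_{i}-\phi_{j}))$ defines a deterministic, continuous, symmetric nonnegative definite matrix-valued function $B=(B_{ij})$ with $B(0)=0$, and the polarization identity
$$\langle M^{n}(\phi_{i}), M^{n}(\phi_{j})\rangle_{t} = \frac{1}{4}\bigl(\langle M^{n}(\phi_{i}+\phi_{j})\rangle_{t} - \langle M^{n}(\phi_{i}-\phi_{j})\rangle_{t}\bigr)$$
together with hypothesis (3) of the present theorem shows that the covariation matrix of $\tilde M^{n}$ converges in probability to $B(t)$ entrywise. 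The classical multidimensional central limit theorem for continuous local martingales (the vector-valued analogue of Rebolledo's theorem used in the proof of Theorem \ref{theoWeakConvContLocalMartin}) then gives weak convergence of $\tilde M^{n}$ in $C_{\infty}(\R^{m})$ to a centered continuous Gaussian martingale $\tilde M$ with predictable covariation $B$; applying the continuous evaluation $x \mapsto (x_{1}(t_{1}), \dots, x_{m}(t_{m}))$ delivers hypothesis (2).

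Theorem \ref{theoWeakConvContLocalMartin} now produces a $\Phi'$-valued regular continuous local martingale $M$ with $M_{0}=0$ such that $M^{n}\Rightarrow M$ in $C_{\infty}(\Phi')$. By the weak convergence together with the identification of the finite-dimensional limits in the preceding step, $M$ is Gaussian mean zero with $\Exp \left[ \abs{M_{t}(\phi)}^{2} \right] = A(t,\phi)$ for all $t \geq 0$ and $\phi \in \Phi$. Since $\langle M(\phi)\rangle_{t}=A(t,\phi)$ is deterministic and finite for every $t$, the continuous local martingale $M(\phi)$ has integrable quadratic variation on every compact interval and is therefore a true square-integrable martingale, completing the proof. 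The main technical obstacle is the second step: the multidimensional martingale CLT requires convergence in probability of the \emph{entire} covariation matrix to a deterministic continuous limit, not merely of the diagonal quadratic variations. Polarization together with hypothesis (3) handles this cleanly, but it is the one nonroutine input to the argument; everything else reduces to a direct application of Theorem \ref{theoWeakConvContLocalMartin} and standard facts about continuous local martingales.
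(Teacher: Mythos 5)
Your proposal is correct and follows essentially the same route as the paper: verify the two hypotheses of Theorem \ref{theoWeakConvContLocalMartin} (uniform tightness of the quadratic variations via monotonicity plus the continuous deterministic limit, and finite-dimensional convergence via Rebolledo's multidimensional martingale CLT), then identify the limit by uniqueness of weak limits. You in fact supply two details the paper leaves implicit --- the polarization argument giving convergence of the full covariation matrix, and the upgrade from local martingale to true square-integrable martingale via the deterministic quadratic variation --- both of which are accurate.
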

\begin{proof}
We must check that the conditions in Theorem \ref{theoWeakConvContLocalMartin} are satisfied. 

Let $\phi \in \Phi$. Since the processes $\langle M^{n}(\phi) \rangle$ are a.e. increasing and the limit $A(\cdot, \phi)$ is an increasing continuous function with $A(0,\phi)=0$, we have  $\langle M^{n}(\phi) \rangle$ converges in probability towards $A(t,\phi)$ uniformly on compact intervals of time as $n \rightarrow \infty$. Then  $(\langle M^{n}(\phi) \rangle: n \in \N)$ is uniformly tight in $C_{\infty}(\R)$. 

Let  $m \in \N$ and $\phi_{1}, \dots, \phi_{m} \in \Phi$. By the central limit theorem for local martingales (see \cite{Rebolledo:1980}) there exists a $\R^{m}$-valued Gaussian mean-zero continuous martingale $M^{\phi_{1}, \dots, \phi_{m}}$ with covariance
$$ \Exp \left[ \norm{M^{\phi_{1}, \dots, \phi_{m}}_{t}}^{2} \right] = \sum_{j=1}^{m} A(t, \phi_{j}), $$
and such that $(M(\phi_{1}), \dots, M(\phi_{m})) \Rightarrow M^{\phi_{1}, \dots, \phi_{m}}$  in $C_{\infty}(\R^{m})$. Therefore, by Theorem  \ref{theoWeakConvContLocalMartin} there exists a $\Phi'$-valued regular local martingale $M=(M_{t}: t \geq 0)$ with continuous paths and $M_{0}=0$, such that $M^{n} \Rightarrow M$ in $C_{\infty}(\Phi')$. By uniqueness of weak limits, we have for  $m \in \N$ and $\phi_{1}, \dots, \phi_{m} \in \Phi$ that $M^{\phi_{1}, \dots, \phi_{m}}$ is an indistinguishable version of $(M(\phi_{1}), \dots, M(\phi_{m}))$. Therefore, $M$ is a Gaussian process and $\Exp \left[ \abs{M_{t}(\phi)}^{2} \right] = A(t,\phi)$ $\forall t \geq 0$ and $\phi \in \Phi$. 
\end{proof}

\section{Application: Weak convergence of solutions to SPDEs with semimartingale noise}\label{secWeakSemimart}

Let $\Phi$ denotes a complete bornological (hence ultrabornological) nuclear space whose strong dual $\Phi'$ is complete and nuclear.  In this section we apply Theorem \ref{theoWeakConvUltrabornological} to formulate sufficient conditions for the weak convergence in $C_{\infty}(\Phi')$ of a sequence of solutions to $\Phi'$-valued stochastic evolution equations with semimartingale noise.

\subsection{Existence and uniqueness of solutions to SPDEs with semimartingale noise}

We denote by $S^{0}$ the space of all real-valued semimartingales equipped with Emery's topology (see Section 12.4 in \cite{CohenElliott}). Further, we denote by $\mathcal{H}^{1}_{S}$ the Banach space (see Section 16.2 in \cite{CohenElliott}) of all the  real-valued semimartingales $x=(x_{t}:t \geq 0)$ satisfying 
$$\norm{x}_{\mathcal{H}^{1}_{S}} \defeq \inf \left\{ \Exp  [m,m]_{\infty}^{1/2} + \Exp \int_{0}^{\infty} \abs{d a_{s} } : x=m+a \right\}<\infty, $$
where the infimum is taken over all the decompositions $x=m+a$ as a sum of a local martingale $m$ and a process of finite variation $a$, and  $([m,m]_{t}: t \geq 0)$ denotes the quadratic variation process associated to the local martingale $m$. 

Assume that the $\Phi'$-valued process $Z=(Z_{t}: t \geq 0)$ is a \emph{semimartingale}, i.e. $\inner{Z}{\phi} \in S^{0}$ for every $\phi \in \Phi$. 
Assume further that the mapping $Z: \Phi \rightarrow S^{0}$ is continuous  and that  $Z$ has continuous paths.

Let $(S(t):t \geq 0) \subseteq \mathcal{L}(\Phi,\Phi)$ be a strongly continuous $C_{0}$-semigroup  with infinitesimal generator $A$  (see \cite{Komura:1968}). Consider the following linear stochastic evolution equations
\begin{equation*}\label{eqLinearSEE}
 dY_{t}=A'Y_{t} dt+dZ_{t}, \quad t \geq 0,
\end{equation*}
with initial condition $Y_{0}=\eta$ $\Prob$-a.e. where $\eta$  is a $\mathcal{F}_{0}$-measurable regular random variable. A $\Phi'$-valued regular adapted process $Y=(Y_{t}: t \geq 0)$ is called a \emph{weak solution} to the above equation if for any given $t > 0$ and  $\phi \in \mbox{Dom}(A)$ we have $\int_{0}^{t} \abs{\inner{Y_{r}}{A\phi}} dr < \infty$ $\Prob$-a.e. and 
\begin{equation*}\label{eqWeakSoluStochEvolEqua}
\inner{Y_{t}}{\phi}= \inner{\eta}{\phi} + \int_{0}^{t} \inner{Y_{r}}{A\phi} dr + \inner{Z_{t}}{\phi}. 
\end{equation*}
If we assume that  $Z$ is a $\mathcal{H}^{1}_{S}$-\emph{semimartingale}, i.e. $\inner{Z}{\phi} \in \mathcal{H}^{1}_{S}$ for every $\phi \in \Phi$, we have by Theorem 6.11 in \cite{FonsecaMora:StochInteg} that there exists a unique weak solution $(Y_{t}: t \geq 0)$ which is regular and has continuous paths, and satisfies $\Prob$-a.e. $\forall \, t \geq 0, \, \phi \in \Phi$,
\begin{equation*}\label{eqExplicitSolSEE}
\inner{Y_{t}}{\phi}= \inner{\eta -Z_{0}}{S(t)\phi}  +\int_{0}^{t} \inner{Z_{s}}{S(t-s)A\phi} ds +\inner{Z_{t}}{\phi}.  
\end{equation*}
By a standard localization argument one can show that the result above remain true if we only assume that $Z$ is \emph{locally a $\mathcal{H}^{1}_{S}$-semimartingale}, i.e. that there exists a sequence of stopping times $(\tau_{n}: n \in \N)$ increasing to $\infty$ $\Prob$-a.e.  such that the stopped process $Z^{\tau_{n}} \defeq Z_{\cdot \wedge \tau_{n}}$ is a $\mathcal{H}^{1}_{S}$-semimartingale.

\subsection{Sufficient conditions for weak convergence of solutions}

For $n=0,1,2, \cdots$, assume that the $\Phi'$-valued process $Z^n=(Z^{n}_{t}: t \geq 0)$ is locally a $\mathcal{H}^{1}_{S}$- semimartingale. We further assume that the mapping $Z^{n}: \Phi \rightarrow S^{0}$ is continuous, and that  $Z^{n}(\phi)$ has continuous paths for each $\phi \in \Phi$. 
Let  $(S^{n}(t):t \geq 0) \subseteq \mathcal{L}(\Phi,\Phi)$ be  strongly continuous $C_{0}$-semigroup with infinitesimal generator $A^{n} \in  \mathcal{L}(\Phi,\Phi)$ and let $\eta^{n}$ be a  $\mathcal{F}_{0}$-measurable regular random variable. 

For every $n$, let $Y^{n}=(Y^{n}_{t}: t \geq 0)$ be the unique weak solution to the initial value problem:
\begin{equation}\label{eqSEEInitialCondition}
dY^{n}_{t}=(A^{n})'Y^{n}_{t} dt +dZ^{n}_{t}, \quad Y^{n}_{0}=\eta^n. 
\end{equation}
As explained above, $(Y^{n}_{t}: t \geq 0)$  is regular and has continuous paths, and satisfies $\Prob$-a.e. $\forall \, t \geq 0$,  $\phi \in \Phi$,
\begin{equation*}\label{eqUniqueWeakSolSEE}
\inner{Y^{n}_{t}}{\phi}= \inner{\eta^{n} -Z^{n}_{0}}{S^{n}(t)\phi}  +\int_{0}^{t} \inner{Z^{n}_{s}}{S^{n}(t-s)A^{n}\phi} ds +\inner{Z^{n}_{t}}{\phi}.
\end{equation*}

In the next theorem we formulate sufficient conditions for the weak convergence of $Y^{n}$ to $Y^{0}$  in $C_{\infty}(\Phi')$. 

\begin{theorem}\label{theoWeakConvSoluSEE}
Assume the following:
\begin{enumerate}
\item \label{assuIndependence} For $n=0,1,2,\dots$, $\eta^{n}-Z^{n}_{0}$ is independent of $Z^{n}$. 
\item \label{assuWeakConvInitCondi} $\eta^{n}-Z^{n}_{0} \Rightarrow \eta^{0}-Z^{0}_{0}$.
\item \label{assuConvSemig} $\forall \phi \in \Phi$, $S^{n}(t)\phi \rightarrow S^{0}(t)\phi$ and $S^{n}(t)A^{n} \phi \rightarrow S^{0}(t) A^{0}\phi$ as $n \rightarrow \infty$ uniformly in $t$ on any bounded interval of time. 
\item  \label{assuConvSemimar} $Z^{n} \Rightarrow Z^{0}$ in $C_{\infty}(\Phi')$.
\end{enumerate}
Then the sequence $(Y^{n}:n \in \N)$ is uniformly tight  in $C_{\infty}(\Phi')$ and $Y^{n} \Rightarrow Y^{0}$ in $C_{\infty}(\Phi')$.
\end{theorem}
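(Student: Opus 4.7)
The plan is to verify the two remaining hypotheses of Theorem \ref{theoWeakConvUltrabornological} for the sequence $(Y^{n}:n \in \N)$ of $\Phi'$-valued regular continuous processes produced by the existence theory recalled in Section \ref{secWeakSemimart}. The key working tool is the mild representation
$$\inner{Y^{n}_{t}}{\phi} = \inner{\eta^{n}-Z^{n}_{0}}{S^{n}(t)\phi} + \int_{0}^{t} \inner{Z^{n}_{s}}{S^{n}(t-s)A^{n}\phi}\,ds + \inner{Z^{n}_{t}}{\phi},$$
which exhibits each path of $\inner{Y^{n}}{\phi}$ as a deterministic functional of the pair $(\eta^{n}-Z^{n}_{0},\, Z^{n}) \in \Phi' \times C_{\infty}(\Phi')$, whose ingredients (the test function $\phi$ and the operator families $S^{n}(\cdot)$, $S^{n}(\cdot)A^{n}$) are controlled by (3).

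For tightness of $(\inner{Y^{n}}{\phi}:n \in \N)$ in $C_{\infty}(\R)$, fix $\phi \in \Phi$ and $T>0$ and split the right-hand side into its three summands. The boundary term $\inner{Z^{n}_{t}}{\phi}$ is tight on $C_{\infty}(\R)$ by (4) together with continuity of the projection $\Pi_{\phi}$. For the evolution term, (2) gives tightness of $(\eta^{n}-Z^{n}_{0})$ in $\Phi'$ and (3) yields that $\{S^{n}(\cdot)\phi : n \in \N\}$ is uniformly Cauchy, hence uniformly bounded and equicontinuous, on $[0,T]$; a modulus-of-continuity argument then delivers tightness on $C_{T}(\R)$. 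The convolution term is treated analogously, combining the uniform control from (3) on $S^{n}(t-s)A^{n}\phi$ (valid for $0 \leq s \leq t \leq T$) with the $C_{\infty}(\Phi')$-tightness of $(Z^{n})$ derived from (4), together with a Fubini-type estimate. Performing this for each $T>0$ yields tightness in $C_{\infty}(\R)$.

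For finite-dimensional convergence, assumption (1) states that $\eta^{n}-Z^{n}_{0}$ is independent of $Z^{n}$ for every $n$, including $n=0$. Combined with the marginal weak convergences (2) and (4), the standard product-measure argument yields $(\eta^{n}-Z^{n}_{0},Z^{n}) \Rightarrow (\eta^{0}-Z^{0}_{0},Z^{0})$ in $\Phi' \times C_{\infty}(\Phi')$. Fix now $m \in \N$, test functions $\phi_{1},\dots,\phi_{m} \in \Phi$ and times $t_{1},\dots,t_{m}\geq 0$, and let $F^{n}:\Phi'\times C_{\infty}(\Phi')\rightarrow \R^{m}$ be the continuous map whose $i$-th coordinate is $\inner{\xi}{S^{n}(t_{i})\phi_{i}}+\int_{0}^{t_{i}}\inner{z(s)}{S^{n}(t_{i}-s)A^{n}\phi_{i}}\,ds+\inner{z(t_{i})}{\phi_{i}}$. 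By (3) the maps $F^{n}$ converge to $F^{0}$ uniformly on compact subsets of $\Phi' \times C_{\infty}(\Phi')$, so an extended continuous-mapping theorem gives $F^{n}(\eta^{n}-Z^{n}_{0},Z^{n}) = (\inner{Y^{n}_{t_{i}}}{\phi_{i}})_{i=1}^{m} \Rightarrow (\inner{Y^{0}_{t_{i}}}{\phi_{i}})_{i=1}^{m}$ in $\R^{m}$. Theorem \ref{theoWeakConvUltrabornological} then yields uniform tightness of $(Y^{n})$ and weak convergence in $C_{\infty}(\Phi')$ to some $\Phi'$-valued regular continuous process $X$; uniqueness of finite-dimensional distributions on the class of continuous processes forces $X$ and $Y^{0}$ to have the same law, whence $Y^{n} \Rightarrow Y^{0}$ as claimed.

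The main obstacle I foresee is the joint continuity underlying the convolution term, specifically verifying uniform-on-compacts convergence $F^{n} \to F^{0}$ using only the pointwise semigroup convergence in (3) and the strong dual topology on $\Phi'$. Passing to the limit inside $\inner{Z^{n}_{s}}{S^{n}(t-s)A^{n}\phi}$ calls for joint control of the duality pairing as both arguments vary, which is a standard but delicate step typically resolved via a Banach-Steinhaus argument exploiting the ultrabornological structure of $\Phi$ together with equicontinuity of weakly convergent sequences in $\Phi'$. Once this equicontinuity is in hand, both the tightness splitting and the extended continuous-mapping step proceed routinely.
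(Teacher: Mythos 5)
Your overall reduction to Theorem \ref{theoWeakConvUltrabornological} matches the paper, but the way you handle the $n$-dependence of the operators is genuinely different. The paper introduces an auxiliary process $\tilde{Y}^{n}_{t}=S^{n}(t)'(\eta^{n}-Z^{n}_{0})+\int_{0}^{t}(A^{0})'S^{0}(t-s)'Z^{n}_{s}\,ds+Z^{n}_{t}$, in which the convolution kernel is frozen at the \emph{limit} operators $S^{0},A^{0}$. This makes the map $F(x)(t)=x_{t}+\int_{0}^{t}(A^{0})'S^{0}(t-s)'x_{s}\,ds$ a single continuous map on $C_{\infty}(\Phi')$, so the ordinary continuous mapping theorem applies to $Z^{n}\Rightarrow Z^{0}$; the discrepancy $\inner{\tilde{Y}^{n}_{t}-Y^{n}_{t}}{\phi}=\int_{0}^{t}\inner{Z^{n}_{s}}{(S^{n}(t-s)A^{n}-S^{0}(t-s)A^{0})\phi}\,ds$ is then shown to vanish uniformly on compacts in probability, using a uniform bound $\sup_{n}\Prob(\sup_{s\leq T}q'(Z^{n}_{s})>C)<\epsilon$ extracted from the tightness of $Z^{n}$ (Step 3 of Lemma \ref{lemmaUniforTightSpaceContiOnT}) together with hypothesis \ref{assuConvSemig}. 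Tightness of $\inner{Y^{n}}{\phi}$ on $C_{\infty}(\R)$ then comes for free, since it is the sum of two sequences that converge weakly in the Polish space $C_{\infty}(\R)$. Independence (hypothesis \ref{assuIndependence}) is used exactly as you use it, to combine the initial-condition term with the $F(Z^{n})$ term.

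The gap in your version is the step you yourself flag: you apply an ``extended continuous mapping theorem'' with $n$-dependent maps $F^{n}\to F^{0}$ on $\Phi'\times C_{\infty}(\Phi')$. The generalized CMT ($X_{n}\Rightarrow X$, $F^{n}(x_{n})\to F^{0}(x)$ whenever $x_{n}\to x$, hence $F^{n}(X_{n})\Rightarrow F^{0}(X)$) is standard only on separable metric spaces; here the state space is neither metrizable nor is the joint convergence $(\eta^{n}-Z^{n}_{0},Z^{n})\Rightarrow(\eta^{0}-Z^{0}_{0},Z^{0})$ a result you can simply quote --- it has to be built from Radonness, the reduction to $(\widehat{\Phi}_{\theta})'$, and metrizability of compacts (Proposition \ref{propProperContiSpaceWeakerCHT}). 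Likewise, establishing uniform-on-compacts convergence $F^{n}\to F^{0}$ requires the equicontinuity of compact (or at least of the relevant tight families of) subsets of $\Phi'$, which is where barrelledness enters; and your ``modulus-of-continuity argument'' and ``Fubini-type estimate'' for tightness of the evolution and convolution terms are asserted rather than carried out. None of these obstacles is fatal --- the claims are true and could be proved --- but as written the proposal defers precisely the work that the paper's decomposition into the fixed map $F$ plus vanishing remainders (steps (c), (d), (e) in the paper's proof) is designed to avoid. If you want to keep your route, you should either prove the extended CMT in this setting after passing to $C_{\infty}((\widehat{\Phi}_{\theta})')$, or, more economically, adopt the paper's trick of freezing the kernel at $(S^{0},A^{0})$ and controlling the difference in probability.
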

\begin{proof} In view of Theorem \ref{theoWeakConvUltrabornological}, we must show the following:
\begin{enumerate}[label=(\alph*)]
\item $\forall \phi \in \Phi$, the sequence of distributions of $\inner{Y^{n}}{\psi}$ is uniformly tight on $C_{\infty}(\R)$.  
\item $\forall m\in \N$, $\phi_{1}, \dots, \phi_{m} \in \Phi$, and $t_{1}, \dots, t_{m} \geq 0$, the
distribution of the $m$-dimensional vector $(\inner{Y^{n}_{t_{1}}}{\phi_{1}}, \dots, \inner{Y^{n}_{t_{m}}}{\phi_{m}})$ converges in distribution to some
probability measure on $\R^{m}$.
\end{enumerate} 
To prove the above, we modify the arguments used by  Fern\'{a}ndez and Gorostiza in \cite{FernandezGorostiza:1992} (Theorem 1). 
For each $n=1,2, \dots$, let $\tilde{Y}^{n}$ be defined by
\begin{equation*} \label{eqModificactionSoluEEE}
\tilde{Y}^{n}_{t}=S^{n}(t)'(\eta^{n}-Z^{n}_{0})+\int_{0}^{t} (A^{0})' S^{0}(t-s)' Z^{n}_{s} ds + Z^{n}_{t}, \quad \forall \, t \geq 0. 
\end{equation*} 
where 
$$ \inner{\int_{0}^{t} (A^{0})' S^{0}(t-s)' x_{s} \, ds}{\phi}= \int_{0}^{t} \inner{x_{s}}{S^{0}(t-s)A^{0} \phi} ds, \quad \forall \, \phi \in \Phi, \, x_{s} \in C_{\infty}(\Phi').$$ 
Observe that in order to prove (a) and (b) it is enough to show 
\begin{enumerate} [label=(\alph*)]
\setcounter{enumi}{2}
\item $\tilde{Y}^{n} \Rightarrow Y^{0}$ in $C_{\infty}(\Phi')$ as $n \rightarrow \infty$, 
\item $\int_{0}^{t} \, \inner{Z^{n}_{s}}{(S^{n}(t-s)A^{n}\phi-S^{0}(t-s)A^{0}\phi)} \, ds \rightarrow 0$ in probability as $n \rightarrow \infty$ for each $t \geq 0$ and $\phi \in \Phi$,     
\item $\forall \phi \in \Phi$, $T>0$, $\epsilon >0$, 
$\displaystyle \Prob \left( \sup_{0 \leq t \leq T} \abs{ \inner{\tilde{Y}^{n}_{t}-Y^{n}_{t}}{\phi}} > \epsilon \right) \rightarrow 0$ as $ n \rightarrow \infty.$
\end{enumerate}
In effect, assume that (c), (d) and (e) holds. We show that (a) holds true. In effect, given $\phi \in \Phi$, by (e) we have $ \inner{\tilde{Y}^{n}-Y^{n}}{\phi} \Rightarrow 0 $
 in $C_{\infty}(\R)$. By (c) we have $\inner{\tilde{Y}^{n}}{\phi} \Rightarrow \inner{Y^{n}}{\phi} $  in $C_{\infty}(\R)$, which implies (a). 
 
 To prove (b), our assumption $Z^{n} \Rightarrow Z^{0}$ in $C_{\infty}(\Phi')$ together with (d) shows that $\forall m\in \N$, $\phi_{1}, \dots, \phi_{m} \in \Phi$, and $t_{1}, \dots, t_{m} \geq 0$, as $n \rightarrow \infty$, 
$$\left(\inner{\tilde{Y}^{n}_{t_{1}}-Y^{n}_{t_{1}}}{\phi_{1}}, \dots, \inner{\tilde{Y}^{n}_{t_{m}}-Y^{n}_{t_{m}}}{\phi_{m}}\right) \Rightarrow (0, \dots, 0). $$ 
Hence from (c) we conclude that (b) holds true. From the arguments above, to prove Theorem \ref{theoWeakConvSoluSEE} we must show (c), (d) and (e). 

Proof of (c):
Let $F:C_{\infty}(\Phi') \rightarrow C_{\infty}(\Phi')$ be defined by 
\begin{equation}\label{eqDefiMappingF}
F(x)(t)= x_{t}+\int_{0}^{t} (A^{0})' S^{0}(t-s)' x_{s} \, ds, 
\end{equation}

We will show that the mapping $F$ is continuous. 
In effect, assume $(x^{\alpha}: \alpha \in A)$ converges to $x$ in $C_{\infty}(\Phi')$. Let $T>0$ and $B \subseteq \Phi$ bounded. Observe that 
\begin{flalign*}
& \sup_{0 \leq t \leq T} \sup_{\phi \in B} \abs{ \inner{\int_{0}^{t} (A^{0})' S^{0}(t-s)' (x^{\alpha}_{s} - x_{s}) \, ds}{\phi}}  
\\
& \leq \sup_{0 \leq t \leq T} \sup_{\phi \in B}  \int_{0}^{t} \inner{x^{\alpha}_{s} - x_{s}}{S^{0}(t-s)A^{0} \phi} ds \\
& \leq T \sup_{0 \leq t \leq T} \sup_{\psi \in K} \abs{\inner{x^{\alpha}_{s} - x_{s}}{\psi}} \rightarrow 0, 
\end{flalign*}
where $K=\bigcup_{\phi \in B} \bigcup_{0 \leq s \leq t \leq T} S^{0}(t-s) A^{0} \phi$ is bounded by the strong continuity of $S^{0}$. Since strong topology on $\Phi'$ is generated by the collection of the seminorms $q_{B}(f)=\sup_{\phi \in B} \abs{\inner{f}{\phi}}$, where $B$ runs over the bounded subsets of $\Phi$, the above calculation shows that $F$ defined by \eqref{eqDefiMappingF} is continuous. 

Observe that from the definition of the mapping $F$ we have
$Y^{0}_{t}=S^{0}(t)'(\eta^{0}-Z^{0}_{0})+F(Z^{0})(t)$ and for each $n=1,2,\dots$ we have $\tilde{Y}^{n}_{t}=S^{n}(t)'(\eta^{n}-Z^{n}_{0})+F(Z^{n})(t)$. Moreover, by our assumption \ref{assuIndependence} for $n=0,1,2,\dots$  we have that $S^{n}(\cdot)'(\eta^{n}-Z_{0}^{n})$ and $F(Z^{n})$ are independent. Hence, to prove (c) it suffices to show  
$S^{n}(\cdot)'(\eta^{n}-Z_{0}^{n}) \Rightarrow S^{0}(\cdot)'(\eta^{0}-Z_{0}^{0})$ in $C_{\infty}(\Phi')$ and $F(Z^{n}) \Rightarrow F(Z^{0})$ in $C_{\infty}(\Phi')$. 
Observe however that by the continuity of $F$ an our assumption $Z^{n} \Rightarrow Z^{0}$ in $C_{\infty}(\Phi')$ we have $F(Z^{n}) \Rightarrow F(Z^{0})$ in $C_{\infty}(\Phi')$. 

Thus, for our final step in the proof of (c) we show $S^{n}(\cdot)'(\eta^{n}-Z_{0}^{n}) \Rightarrow S^{0}(\cdot)'(\eta^{0}-Z_{0}^{0})$ in $C_{\infty}(\Phi')$. 
In effect, by our assumptions \ref{assuWeakConvInitCondi} and \ref{assuConvSemig},  $\forall m\in \N$, $\phi_{1}, \dots, \phi_{m} \in \Phi$, and $t_{1}, \dots, t_{m} \geq 0$, as $n \rightarrow \infty$, 
\begin{flalign*}
& \left(\inner{S^{n}(t_{1})'(\eta^{n}-Z_{0}^{n})}{\phi_{1}}, \dots, \inner{S^{n}(t_{m})'(\eta^{n}-Z_{0}^{n})}{\phi_{m}}\right) \\ 
& =  \left(\inner{\eta^{n}-Z_{0}^{n}}{S^{n}(t_{1}) \phi_{1}}, \dots, \inner{\eta^{n}-Z_{0}^{n}}{S^{n}(t_{m})\phi_{m}}\right) \\
& \hspace{15pt} \Rightarrow \left(\inner{\eta^{n}-Z_{0}^{n}}{S^{0}(t_{1}) \phi_{1}}, \dots, \inner{\eta^{n}-Z_{0}^{n}}{S^{0}(t_{m})\phi_{m}}\right)  \\
& \hspace{30pt} = \left(\inner{S^{0}(t_{1})'(\eta^{n}-Z_{0}^{n})}{\phi_{1}}, \dots, \inner{S^{0}(t_{m})'(\eta^{n}-Z_{0}^{n})}{\phi_{m}}\right)
\end{flalign*}
Similarly, for every $\phi \in \Phi$ we have $\inner{S^{n}(\cdot)'(\eta^{n}-Z_{0}^{n})}{\phi} \Rightarrow \inner{S^{0}(\cdot)'(\eta^{n}-Z_{0}^{n})}{\phi}$ in $C_{\infty}(\R)$. 
Thus by Prokhorov's theorem $( \inner{S^{n}(\cdot)'(\eta^{n}-Z_{0}^{n})}{\phi}: n \in \N)$ is uniformly tight on $C_{\infty}(R)$ for every $\phi \in \Phi$. Then, by Theorem  
\ref{theoWeakConvUltrabornological} we have  $S^{n}(\cdot)'(\eta^{n}-Z_{0}^{n}) \Rightarrow S^{0}(\cdot)'(\eta^{0}-Z_{0}^{0})$ in $C_{\infty}(\Phi')$.

Proof of (d): Since $Z^{n} \Rightarrow Z^{0}$ in $C_{\infty}(\Phi')$, for each $\phi \in \Phi$ we have $\inner{Z^{n}}{\phi}  \Rightarrow \inner{Z^{0}}{\phi}$ in $C_{\infty}(\Phi')$. Then by Prokhorov's theorem $(\inner{Z^{n}}{\phi} : n \in \N)$ is uniformly tight on $C_{\infty}(\R)$. Then, one can show (see the arguments used in the proof of Step 3 in Lemma \ref{lemmaUniforTightSpaceContiOnT}) that for given $t>0$ and $\epsilon>0$ there exist a continuous Hilbertian seminorm $q$ on $\Phi$ and $C>0$ such that 
$$ \sup_{n \in \N} \Prob \left( \sup_{0 \leq s \leq t} q'(Z^{n}_{s}) > C \right) < \epsilon. $$ 

Then
\begin{flalign*}
& \Prob \left( \abs{ \int_{0}^{t} \,\inner{Z^{n}_{s}}{(S^{n}(t-s)A^{n}\phi-S^{0}(t-s)A^{0}\phi)} \, ds } > \epsilon \right) \\
& \leq \Prob \left( t \, \sup_{0 \leq s \leq t} q'(Z^{n}_{s}) \sup_{0 \leq s \leq t} q \left( S^{n}(t-s)A^{n} \phi - S^{0}(t-s)A^{0}\phi \right) > \epsilon \right)\\
& \leq \epsilon + \Prob \left( \sup_{0 \leq s \leq t} q \left( S^{n}(t-s)A^{n} \phi - S^{0}(t-s)A^{0}\phi \right) > \epsilon/Ct \right). 
\end{flalign*}
By \ref{assuConvSemig}, the last term in the above inequality tends to $0$ as $n \rightarrow \infty$. Since $\epsilon$ is arbitrary, we have shown (d).  
  
Proof of (e): Given $t>0$ and $\phi \in \Phi$, we have
$$ \inner{\tilde{Y}^{n}_{t}-Y^{n}_{t}}{\phi}=  \int_{0}^{t} \,\inner{Z^{n}_{s}}{(S^{n}(t-s)A^{n}\phi-S^{0}(t-s)A^{0}\phi)} \, ds, $$
Hence (e) can be proved from similar arguments to those used for (d). 
\end{proof}

\section{Example of Theorems \ref{theoCentralLimitTheorem} and \ref{theoWeakConvSoluSEE}}\label{sectExample}

In this section we consider a concrete example that illustrates the applicability of our results. We benefit from ideas taken from \cite{Fierro:1988, Fouque:1984}. 

\begin{example} 
Let $(B^{i}:i \in \N)$ be a sequence of independent $d$-dimensional Brownian motions starting at zero. For every $n =1,2,\dots$ define a $\mathscr{S}'(\R^{d})$-valued process  $M^{n}=(M^{n}_{t}: t \geq 0)$ by  
$$M^{n}_{t}(\phi)=n^{-1/2} \sum_{i=1}^{n} \int_{0}^{t} \nabla \phi (B^{i}_{s}) \cdot  dB^{i}_{s}, \quad \forall \, t \geq 0, \, \phi \in \mathscr{S}(\R^{d}).$$
Each $M^{n}$ has continuous paths and is \emph{locally a $\mathcal{M}^{2}_{\infty}$-martingale}, i.e.  there exists a sequence of stopping times $(\tau_{k,n}: k \in \N)$ increasing to $\infty$ $\Prob$-a.e.  such that for every $\phi  \in \mathscr{S}(\R^{d})$ we have $M^{n}_{\cdot \wedge \tau_{k,n}}(\phi) \in \mathcal{M}^{2}_{\infty}$ (is a square integrable martingale). Observe moreover that 
$$  \langle M^{n}(\phi) \rangle_{t} = n^{-1} \sum_{i=1}^{n} \int_{0}^{t} \nabla (\phi (B^{i}_{s}) )^{2} ds, \quad \forall \, t \geq 0, \, \phi \in \mathscr{S}(\R^{d}).$$
By the central limit theorem, $\forall \, t \geq 0, \, \phi \in \mathscr{S}(\R^{d})$ we have $\Prob$-a.e.
$$ \langle M^{n}(\phi) \rangle_{t} \rightarrow \int_{0}^{t} \Exp \left[ (\nabla (\phi (B_{s}) )^{2} \right] ds, $$
where $(B_{t}: t\geq 0)$ is a $d$-dimensional Brownian motion starting at zero. 

Let $\mu_{t}(x)=(4 \pi t)^{-d/2} e^{-\norm{x}^{2}/4t}$ the density of a $d$-dimensional Gaussian random variable with mean zero and covariance matrix $2tI$. Define a mapping $A:[0,\infty) \times \mathscr{S}(\R^{d}) \rightarrow \R$ by 
$$A(t,\phi)\defeq \int_{0}^{t} \Exp \left[ (\nabla (\phi (B_{s}) )^{2} \right] ds = \int_{0}^{t}  ((\nabla \phi  )^{2} \ast \mu_{s})(0) \, ds, \quad \forall \, t \geq 0, \, \phi \in \mathscr{S}(\R^{d}). $$
Using Theorem  \ref{theoCentralLimitTheorem} we can show that the sequence $(M^{n}: n \in \N)$ converges weakly in 
 $C_{\infty}(\mathscr{S}'(\R^{d}))$. But before, we give a explicit definition for the limit. 

Let $W^{1}, \dots, W^{d}$ be $d$ independent $\mathscr{S}'(\R^{d})$-valued Wiener processes which are standard, i.e. $\Exp \left[ \abs{W^{i}_{t}(\phi)}^{2} \right] = t \norm{\phi}^{2}_{L^{2}(\R)}$ for every $t \geq 0$ and $\phi \in \mathscr{S}(\R^{d})$. Given $i=1, \dots, d$, let $F^{i}: [0,\infty) \rightarrow \mathcal{L}(\mathscr{S}'(\R^{d}),\mathscr{S}'(\R^{d}))$ given by $F^{i}(s)= ( \partial_{i} \sqrt{\mu_{s}}) + \sqrt{\mu_{s}} \partial_{i}$.   
Since for every $t>0$ and $\phi \in \mathscr{S}(\R^{d})$ we have
$$
\int_{0}^{t} \norm{F^{i}(s)'\phi}^{2}_{L^{2}(\R)} ds 
= \int_{0}^{t} \norm{-\sqrt{\mu_{s}} (\partial_{i}\phi)}^{2}_{L^{2}(\R)} ds = \int_{0}^{t} \int_{\R^{d}} \mu_{s}(x)(\partial_{i}\phi(x))^{2} dx ds< \infty,
$$
then $F^{i}$ is stochastically integrable with respect to $W^{i}$ (see \cite{FonsecaMora:SPDECMVM}). Hence, the $\mathscr{S}'(\R^{d})$-valued process $M^{0}=(M^{0}_{t}: t \geq 0)$ defined by
$$ M^{0}_{t}=\sum_{i=1}^{d} \int_{0}^{t} \, ( ( \partial_{i} \sqrt{\mu_{s}}) + \sqrt{\mu_{s}} \partial_{i}) dW_{i}, $$
is a Gaussian mean-zero continuous martingale satisfying 
$$ \Exp \left[ \abs{M^{0}_{t}(\phi)}^{2} \right] = \sum_{i=1}^{d}  \int_{0}^{t} \int_{\R^{d}} \mu_{s}(x)(\partial_{i}\phi(x))^{2} dx ds = \int_{0}^{t}  ((\nabla \phi  )^{2} \ast \mu_{s})(0) \, ds. $$
Then, by Theorem  \ref{theoCentralLimitTheorem} we have  $M^{n} \Rightarrow M^{0}$ in $C_{\infty}(\mathscr{S}'(\R^{d}))$. 

For every $n=0,1,2,\dots$, let $\eta^{n}$ be a $\mathcal{F}_{0}$-measurable random variable in $\mathscr{S}(\R^{d})'$ independent of $M^{n}$. Consider the lineal stochastic heat equation in $\mathscr{S}(\R^{d})'$:
\begin{equation}\label{eqHeatSEEForMn}
d Y^{n}_{t}=\Delta Y_{t}+ dM^{n}_{t}, \quad t \geq 0,
\end{equation}
with initial condition $Y^{n}_{0}=\eta^{n}$. Here  $\Delta$ is the Laplace operator on $\mathscr{S}(\R^{d})'$. 

Recall that $\Delta \in \mathcal{L}(\mathscr{S}(\R^{d}),\mathscr{S}(\R^{d}))$ is the infinitesimal generator of the \emph{heat semigroup}  $(S(t): t \geq 0)$ which is given by 
$S(t)\phi=\mu_{t} \ast \phi$ for each $\phi \in \mathscr{S}(\R^{d})$. The unique weak solution $Y^{n}=(Y^{n}_{t}: t \geq 0)$ to \eqref{eqHeatSEEForMn} satisfies that for any $t \geq 0$ and $\phi \in   \mathscr{S}(\R^{d})$:
\begin{equation*}\label{eqSoluHeatEqua}
\inner{Y^{n}_{t}}{\phi}= \inner{\eta^{n}}{\mu_{t} \ast \phi}  +\int_{0}^{t} \inner{M^{n}_{s}}{\Delta (\mu_{t-s} \ast \phi)} \, ds + \inner{M^{n}_{s}}{\phi}. 
\end{equation*}
If we assume that $\eta^{n} \Rightarrow \eta^{0}$, then by Theorem \ref{theoWeakConvSoluSEE} we conclude that $Y^{n} \Rightarrow Y^{0}$ in $C_{\infty}(\mathscr{S}'(\R^{d}))$. 
\end{example}

%%%%%%%%%%%%%%%%%%%%%%%%%%%%%%%%%%%%

%Declaration of competing interest
%The authors declare that they have no known competing financial interests or personal relationships that could have appeared to influence the work reported in this paper

\smallskip

\noindent \textbf{Acknowledgements} The author would like to thank Adri\'an Barquero-S\'anchez for helpful suggestions  that contributed greatly to improve the presentation of this article.  This work was supported by The University of Costa Rica through the grant ``821-C2-132- Procesos cil\'{i}ndricos y ecuaciones diferenciales estoc\'{a}sticas''.

%The author thank two anonymous referees for valuable comments and suggestions that contributed greatly to improve the presentation of this article. 

%\smallskip

%\noindent \textbf{Author Contributions.} The author confirms sole responsibility for the production of this manuscript.

%\noindent \textbf{Data Availability.} Data sharing not applicable to this article as no datasets were generated or analysed during the current study. 

%\section*{Declarations}

%\noindent \textbf{Conflict of interest} The authors have no conflicts of interest to declare that are relevant to the content of this article.

\end{document}